\newcommand\ie{{\em i.e.}}
\def\B{\mathscr B}
\def\C{\mathbb C}
\def\d{\mathrm{d}}
\def\H{\mathcal H}
\def\L{\mathscr L}
\def\N{\mathbb N}
\def\Q{\mathbb Q}
\def\R{\mathbb R}
\def\T{\mathbb T}
\def\U{\mathscr U}
\def\Z{\mathbb Z}
\def\dom{\mathcal D}
\def\div{\mathop{\mathrm{div}}\nolimits}
\def\e{\mathop{\mathrm{e}}\nolimits}
\def\det{\mathop{\mathrm{det}}\nolimits}
\def\re{\mathop{\mathsf{Re}}\nolimits}
\def\linf{\mathsf{L}^{\:\!\!\infty}}
\def\ltwo{\mathsf{L}^{\:\!\!2}}
\def\slim{\mathop{\hbox{\rm s-}\lim}\nolimits}
\newtheorem{Theorem}{Theorem}[section]
\newtheorem{Remark}[Theorem]{Remark}
\newtheorem{Lemma}[Theorem]{Lemma}
\newtheorem{Assumption}[Theorem]{Assumption}
\newtheorem{Corollary}[Theorem]{Corollary}
\begin{document}


\title{Commutator methods for the spectral analysis of uniquely ergodic dynamical
systems}

\author{R. Tiedra de Aldecoa\footnote{Supported by the Chilean Fondecyt Grant 1090008
and by the Iniciativa Cientifica Milenio ICM P07-027-F ``Mathematical Theory of
Quantum and Classical Magnetic Systems'' from the Chilean Ministry of Economy.}
}
\date{\small}
\maketitle \vspace{-1cm}

\begin{quote}
\emph{
\begin{itemize}
\item[] Facultad de Matem\'aticas, Pontificia Universidad Cat\'olica de Chile,\\
Av. Vicu\~na Mackenna 4860, Santiago, Chile
\item[] \emph{E-mail\hspace{1pt}:} rtiedra@mat.puc.cl
\end{itemize}
}
\end{quote}


\begin{abstract}
We present a method, based on commutator methods, for the spectral analysis of
uniquely ergodic dynamical systems. When applicable, it leads to the absolute
continuity of the spectrum of the corresponding unitary operators. As an illustration,
we consider time changes of horocycle flows, skew products over translations and
Furstenberg transformations. For time changes of horocycle flows, we obtain absolute
continuity under assumptions weaker than the ones to be found in the literature.
\end{abstract}

\textbf{2010 Mathematics Subject Classification\hspace{1pt}:} 37A30, 37C10, 37C40,
37D40, 58J51, 81Q10.

\smallskip

\textbf{Keywords\hspace{1pt}:} Commutator methods, continuous spectrum, uniquely
ergodic, horocycle flow, skew products, Furstenberg transformations.

\section{Introduction}
\setcounter{equation}{0}

Commutator methods in the sense of \'E. Mourre \cite{Mou81} and their various
extensions (see for instance \cite{ABG,BM97,DG97,GGM04,JMP84,PSS81,Sah97_2}) are a
very efficient tool for the spectral analysis of self-adjoint operators. They have
been fruitfully applied to numerous models in mathematical physics and pure
mathematics. Even so, it is only recently that an analogue of the theory has been
developed for unitary operators (see \cite{ABCF06} for the first article on the
topic, and \cite{FRT12} for an optimal formulation of the theory). Accordingly, the
absence of general works on commutator methods for the spectral analysis of unitary
operators from ergodic theory is not a surprise. Our purpose here is to start filling
this gap by presenting an abstract class of uniquely ergodic dynamical systems to
which commutator methods apply. The class in question is simple enough to be
described in terms of commutators and general enough to cover interesting examples of
uniquely ergodic dynamical systems. We hope that the examples treated in this paper,
together with the simplicity of our approach, will motivate other works on commutator
methods for the spectral analysis of (uniquely ergodic) dynamical systems. 

The content of the paper is the following. In Section \ref{Sec_com}, we recall the
needed definitions and results on commutator methods, both for self-adjoint and
unitary operators. Then, we exhibit a general class of unitary operators which are
shown to have purely absolutely continuous spectrum thanks to commutator methods.
Also, we explain why typical examples of such unitary operators are Koopman operators
induced by uniquely ergodic transformations. After that, we dedicate the rest of the
paper to applications of the theory of Section \ref{Sec_com}. We consider time
changes of horocycle flows in Section \ref{sec_horo}, skew products over translations
in Section \ref{sec_skew} and Furstenberg transformations in Section \ref{Sec_Fur}.

In Theorem \ref{thm_spec} of Section \ref{sec_horo}, we show that time changes of
horocycle flows on compact surfaces of constant negative curvature have purely
absolutely continuous spectrum in the orthocomplement of the constant functions. Our
result holds for time changes of class $C^3$, which is the weakest regularity
assumption under which this absolute continuity has been established (see the
discussion after Theorem \ref{thm_spec} for a comparison with recent results of G.
Forni and C. Ulcigrai \cite{FU12} and of the author \cite{Tie12}). In Theorem
\ref{spec_co} of Section \ref{sec_skew}, we prove that skew products over
translations on compact metric abelian Banach Lie groups have countable Lebesgue
spectrum in the orthocomplement of functions depending only on the first variable.
Our result holds for cocycle functions being differentiable along the flow generated
by the translations and with corresponding derivative being Dini-continuous (see
Assumption \ref{ass_phi} for details). In the case of skew products on tori, this
complements previous results of A. Iwanik, M. Lema\'nzyk and D. Rudolph
\cite{{Iwa97_1},ILR93} in one dimension and a previous result of A. Iwanik
\cite{Iwa97_2} in higher dimensions (see Corollary \ref{cor_tori} and the discussion
that follows). Finally, in Theorem \ref{Thm_Fur} of Section \ref{Sec_Fur}, we show
that Furstenberg transformations have countable Lebesgue spectrum in the
orthocomplement of functions depending only on the first variable for perturbations
satisfying a Dini condition. This result is not optimal (in \cite[Cor.~3]{ILR93} the
perturbations are only of bounded variation), but our proof is of independent
interest since it is completely new and does not rely on the study of the Fourier
coefficients of the spectral measure.

\section{Commutator methods for uniquely ergodic dynamical systems}\label{Sec_com}
\setcounter{equation}{0}

We present in this section a method, based on commutator methods, for the spectral
analysis of uniquely ergodic dynamical systems. We start by recalling some facts on
commutator methods borrowed from \cite{ABG}, \cite{FRT12} and \cite{Sah97_2} (see
also the original paper \cite{Mou81} of \'E. Mourre).

Let $\H$ be a Hilbert space with scalar product
$\langle\hspace{1pt}\cdot\hspace{1pt},\hspace{1pt}\cdot\hspace{1pt}\rangle$ linear in
the second argument, denote by $\B(\H)$ the set of bounded linear operators on $\H$,
and write $\|\hspace{1pt}\cdot\hspace{1pt}\|$ for the norm on $\H$ and the norm on
$\B(\H)$. Let also $A$ be a self-adjoint operator in $\H$ with domain $\dom(A)$, and
take $S\in\B(\H)$. For any $k\in\N$, we say that $S$ belongs to $C^k(A)$, with
notation $S\in C^k(A)$, if the map
\begin{equation}\label{eq_group}
\R\ni t\mapsto\e^{-itA}S\e^{itA}\in\B(\H)
\end{equation}
is strongly of class $C^k$. In the case $k=1$, one has $S\in C^1(A)$ if and only if
the quadratic form
$$
\dom(A)\ni\varphi\mapsto\big\langle\varphi,iSA\varphi\big\rangle
-\big\langle A\varphi,iS\varphi\big\rangle\in\C
$$
is continuous for the topology induced by $\H$ on $\dom(A)$. We denote by
$[\hspace{1pt}iS,A]$ the bounded operator associated with the continuous extension of
this form, or equivalently the strong derivative of the function \eqref{eq_group} at
$t=0$.

A condition slightly stronger than the inclusion $S\in C^1(A)$ is provided by the
following definition\hspace{1pt}: $S$ belongs to $C^{1+0}(A)$, with notation
$S\in C^{1+0}(A)$, if $S\in C^1(A)$ and
$$
\int_0^1\frac{\d t}t\hspace{1pt}\big\|\e^{-itA}[A,S]\e^{itA}-[A,S]\big\|<\infty.
$$
If we regard $C^1(A)$, $C^{1+0}(A)$ and $C^2(A)$ as subspaces of $\B(\H)$, then we
have the inclusions
$$
C^2(A)\subset C^{1+0}(A)\subset C^1(A)\subset\B(\H).
$$

Now, if $H$ is a self-adjoint operator in $\H$ with domain $\dom(H)$ and spectrum
$\sigma(H)$, we say that $H$ is of class $C^k(A)$ if $(H-z)^{-1}\in C^k(A)$ for some
$z\in\C\setminus\sigma(H)$. So, $H$ is of class $C^1(A)$ if and only if the quadratic
form
$$
\dom(A)\ni\varphi\mapsto
\big\langle\varphi,(H-z)^{-1}A\hspace{1pt}\varphi\big\rangle
-\big\langle A\hspace{1pt}\varphi,(H-z)^{-1}\varphi\big\rangle\in\C
$$
extends continuously to a bounded form defined by the operator
$[(H-z)^{-1},A]\in\B(\H)$. In such a case, the set $\dom(H)\cap\dom(A)$ is a core for
$H$ and the quadratic form
$$
\dom(H)\cap\dom(A)\ni\varphi\mapsto\big\langle H\varphi,A\varphi\big\rangle
-\big\langle A\varphi,H\varphi\big\rangle\in\C
$$
is continuous in the topology of $\dom(H)$ \cite[Thm.~6.2.10(b)]{ABG}. This form then
extends uniquely to a continuous quadratic form on $\dom(H)$ which can be identified
with a continuous operator $[H,A]$ from $\dom(H)$ to the adjoint space $\dom(H)^*$.
In addition, the following relation holds in $\B(\H)$\hspace{1pt}:
\begin{equation}\label{2com}
\big[(H-z)^{-1},A\big]=-(H-z)^{-1}[H,A](H-z)^{-1}.
\end{equation}

Let $E^H(\hspace{1pt}\cdot\hspace{1pt})$ denote the spectral measure of the
self-adjoint operator $H$, and assume that $H$ is of class $C^1(A)$. Then, for each
bounded Borel set $J\subset\R$ the operator $E^H(J)[\hspace{1pt}iH,A]E^H(J)$ is
bounded and self-adjoint. If there exist a number $a>0$ and a compact operator
$K\in\B(\H)$ such that
\begin{equation}\label{Mourre_H}
E^H(J)[\hspace{1pt}iH,A]E^H(J)\ge aE^H(J)+K,
\end{equation}
then one says that $H$ satisfies a Mourre estimate on $J$ and that $A$ is a conjugate
operator for $H$ on $J$. Also, one says that $H$ satisfies a strict Mourre estimate
on $J$ if \eqref{Mourre_H} holds with $K=0$. The main consequence of a strict Mourre
estimate is to imply a limiting absorption principle for $H$ on $J$ if $H$ is also of
class $C^{1+0}(A)$. This in turns implies that $H$ has no singular spectrum in $J$.
If $H$ only satisfies a Mourre estimate on $J$, then the same holds up to the
possible presence of a finite number of eigenvalues in $J$, each one of finite
multiplicity. We recall here a version of these results (see \cite[Sec.~7.1.2]{ABG}
and \cite[Thm.~0.1]{Sah97_2} for more details)\hspace{1pt}:

\begin{Theorem}\label{thm_abs}
Let $H$ and $A$ be self-ajoint operators in a Hilbert space $\H$, with $H$ of class
$C^{1+0}(A)$. Suppose there exist a bounded Borel set $J\subset\R$, a number $a>0$
and a compact operator $K\in\B(\H)$ such that
\begin{equation}\label{Mourre_H_bis}
E^H(J)[\hspace{1pt}iH,A]E^H(J)\ge aE^H(J)+K.
\end{equation}
Then, $H$ has at most finitely many eigenvalues in $J$, each one of finite
multiplicity, and $H$ has no singularly continuous spectrum in $J$. Furthermore, if
\eqref{Mourre_H_bis} holds with $K=0$, then $H$ has no singular spectrum in $J$.
\end{Theorem}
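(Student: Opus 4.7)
My plan is to reconstruct the two classical pillars of Mourre theory with optimal regularity: a virial-type bound for the point spectrum, and a limiting absorption principle for the continuous part, both carried out under the $C^{1+0}(A)$ hypothesis. The eigenvalue statement rests on the virial theorem, available since $H$ is of class $C^1(A)$: any eigenvector $\varphi$ of $H$ with eigenvalue in $J$ satisfies $\langle\varphi,[\hspace{1pt}iH,A]\varphi\rangle=0$, read via the continuous extension of $[H,A]$ on $\dom(H)$. If there existed an infinite orthonormal sequence $(\varphi_n)$ of such eigenvectors, then $\varphi_n=E^H(J)\varphi_n\rightharpoonup0$ weakly, so $\langle\varphi_n,K\varphi_n\rangle\to0$ by compactness of $K$. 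Testing \eqref{Mourre_H_bis} against $\varphi_n$ and using the virial identity then forces $0\ge a$, a contradiction. The same argument rules out infinite-dimensional eigenspaces at any single eigenvalue in $J$.

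Absence of singular continuous spectrum then reduces, after excising a neighbourhood of the finitely many eigenvalues and absorbing the compact remainder into $aE^H(J')$, to the case of a strict Mourre estimate on a subinterval $J'\subset J$. On such $J'$, the technical goal is to prove, for some $s>1/2$, the uniform bound
$$
\sup_{\lambda\in J',\,\varepsilon>0}\big\|\langle A\rangle^{-s}(H-\lambda-i\varepsilon)^{-1}\langle A\rangle^{-s}\big\|<\infty,
$$
from which absolute continuity of $E^H$ on $J'$ follows by standard criteria. I would establish this bound via the differential inequality method: set $F_\varepsilon(\lambda)=(H-\lambda-i\varepsilon B)^{-1}$ for a suitable bounded positive regularization $B$ of $[\hspace{1pt}iH,A]$ obtained by functional calculus truncation of $A$, differentiate in $\varepsilon$, and use the strict Mourre estimate to produce an inequality whose integration in $\varepsilon$ controls $F_\varepsilon$ uniformly, then pass to the limit in the regularization.

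The main technical obstacle is controlling the error terms generated in that differentiation when the unbounded $A$ is replaced by bounded truncations: these errors involve expressions of the form $\e^{-itA}[H,A]\e^{itA}-[H,A]$, and the precise role of the $C^{1+0}(A)$ hypothesis is that the Dini-type integrability
$$
\int_0^1\frac{\d t}t\,\big\|\e^{-itA}[H,A]\e^{itA}-[H,A]\big\|<\infty
$$
is exactly what is needed to bound the total error by a finite constant rather than by a logarithmically divergent integral, which is the obstruction preventing the naive $C^1(A)$ hypothesis from closing the estimate. Once this bound is in place, Gronwall-type integration of the differential inequality yields the desired uniform boundary bound on the sandwiched resolvent, hence the limiting absorption principle, and hence absence of singularly continuous spectrum in $J$. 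In the strict case $K=0$, no excision is needed: the same argument applied directly on $J$ rules out eigenvalues as well and gives absence of singular spectrum.
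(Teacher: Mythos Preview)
The paper does not prove this theorem: it is stated as a recalled result, with a reference to \cite[Sec.~7.1.2]{ABG} and \cite[Thm.~0.1]{Sah97_2} for the proof. Your sketch is an accurate outline of the argument found in those references --- virial theorem under $C^1(A)$ regularity for the finiteness of eigenvalues, followed by a limiting absorption principle via the differential-inequality method, with the $C^{1+0}(A)$ Dini condition controlling the error terms arising from the bounded regularisation of $A$ --- so there is nothing to correct at this level of detail.
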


Similar notations and results exist in the case of a unitary operator $U\in C^1(A)$
with spectral measure $E^U(\hspace{1pt}\cdot\hspace{1pt})$ and spectrum
$\sigma(U)\subset\mathbb S^1\equiv\{z\in\C\mid|z|=1\}$. Namely, one says that $U$
satisfies a Mourre estimate on a Borel set $\Theta\subset\mathbb S^1$ if there exists
a number $a>0$ and a compact operator $K\in\B(\H)$ such that
\begin{equation}\label{Mourre_U}
E^U(\Theta)\hspace{1pt}U^*[A,U]E^U(\Theta)\ge aE^U(\Theta)+K.
\end{equation}
Also, one says that $U$ satisfies a strict Mourre estimate on $\Theta$ if
\eqref{Mourre_U} holds with $K=0$. Furthermore, one has the following result on the
spectral nature of $U$ (see \cite[Thm.~2.7 \& Rem.~2.8]{FRT12} for a more general
version of this result)\hspace{1pt}:

\begin{Theorem}\label{thm_unitary}
Let $U$ and $A$ be respectively a unitary and a self-ajoint operator in a Hilbert
space $\H$, with $U\in C^{1+0}(A)$. Suppose there exist an open set
$\Theta\subset\mathbb S^1$, a number $a>0$ and a compact operator $K\in\B(\H)$ such
that
\begin{equation}\label{Mourre_U_bis}
E^U(\Theta)\hspace{1pt}U^*[A,U]E^U(\Theta)\ge aE^U(\Theta)+K.
\end{equation}
Then, $U$ has at most finitely many eigenvalues in $\Theta$, each one of finite
multiplicity, and $U$ has no singularly continuous spectrum in $\Theta$. Furthermore,
if \eqref{Mourre_U_bis} holds with $K=0$, then $U$ has no singular spectrum in
$\Theta$.
\end{Theorem}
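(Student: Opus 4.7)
My plan is to reduce Theorem \ref{thm_unitary} to its self-adjoint analogue Theorem \ref{thm_abs} via a Cayley-type transform. Since $\Theta$ is open, I can split it into finitely many proper open arcs and argue on each, so I may assume there exists $\zeta_0\in\mathbb{S}^1\setminus\overline{\Theta}$. The M\"obius map $\phi(\zeta):=i(\zeta+\zeta_0)(\zeta-\zeta_0)^{-1}$ is a real-analytic homeomorphism of $\mathbb{S}^1\setminus\{\zeta_0\}$ onto $\mathbb{R}$, and $H:=\phi(U)$, defined via the Borel functional calculus, is a self-adjoint operator with spectrum $\phi\bigl(\sigma(U)\setminus\{\zeta_0\}\bigr)$. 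Writing $H=i+2i\zeta_0(U-\zeta_0)^{-1}$ and using \eqref{2com}, I obtain $[iH,A]=-2\zeta_0(U-\zeta_0)^{-1}[U,A](U-\zeta_0)^{-1}$. Setting $J:=\phi(\Theta)$, the key observation is that $E^U(\Theta)=E^H(J)$ and that $(U-\zeta_0)^{-1}$ commutes with all spectral projections of $U$.

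The next step is to transfer the hypotheses. The Mourre estimate \eqref{Mourre_U_bis} on $\Theta$ translates, after sandwiching the commutator identity between $E^U(\Theta)=E^H(J)$ and using that $\overline{\Theta}$ is compact and bounded away from $\zeta_0$ (so that $(U-\zeta_0)^{-1}$ restricted to $\Ran E^U(\Theta)$ is bounded below), into the Mourre estimate \eqref{Mourre_H_bis} for $H$ on $J$, with a new positive constant $a'>0$ and a new compact operator replacing $K$. The inheritance $U\in C^{1+0}(A)\Rightarrow H\in C^{1+0}(A)$ reduces, via the decomposition above, to the stability of $C^{1+0}(A)$ under the map $U\mapsto(U-\zeta_0)^{-1}$; the latter follows by using \eqref{2com} to express $[A,(U-\zeta_0)^{-1}]$ as a product of bounded factors and $[A,U]$, and then by transferring the Dini condition through the product via the observation that conjugation by $\e^{itA}$ is an isometry on $\B(\H)$.

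With both hypotheses transferred, Theorem \ref{thm_abs} applied to $(H,A)$ on $J$ yields the finiteness of eigenvalues of $H$ in $J$ (each of finite multiplicity), the absence of singularly continuous spectrum in $J$, and (when $K=0$) the absence of singular spectrum in $J$. Pulling these conclusions back through the homeomorphism $\phi$ gives the statement of Theorem \ref{thm_unitary}. The main technical obstacle is the stability of $C^{1+0}(A)$ under inversion: while the $C^1(A)$-stability is standard (cf. \cite[Prop.~5.1.5]{ABG}), the Dini condition requires estimating the norm of $\e^{-itA}[A,(U-\zeta_0)^{-1}]\e^{itA}-[A,(U-\zeta_0)^{-1}]$ as a sum of three terms obtained by inserting telescoping differences into the product $(U-\zeta_0)^{-1}[A,U](U-\zeta_0)^{-1}$; only one of these terms carries the Dini-continuous factor $\e^{-itA}[A,U]\e^{itA}-[A,U]$, while the other two are controlled by the uniform norm-continuity of $t\mapsto\e^{-itA}(U-\zeta_0)^{-1}\e^{itA}$ combined with boundedness. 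An alternative route, adopted in \cite{FRT12}, is to bypass the Cayley transform and adapt Mourre's differential inequality method directly to the unitary setting; this is more laborious but avoids any restriction on the location of $\Theta$ in $\mathbb{S}^1$.
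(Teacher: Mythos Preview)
The paper does not prove Theorem \ref{thm_unitary} itself; it quotes the statement and defers to \cite[Thm.~2.7 \& Rem.~2.8]{FRT12}, where (as you note in your last sentence) Mourre's differential-inequality machinery is redeveloped directly in the unitary setting. Your Cayley-transform reduction to Theorem \ref{thm_abs} is therefore a genuinely different and more elementary route, and the outline is sound. The trade-off is exactly the one you identify: recycling the self-adjoint theorem costs a localisation step (the choice of $\zeta_0$) and some functional-calculus bookkeeping, whereas the direct unitary approach of \cite{FRT12} avoids any auxiliary base point but rebuilds the limiting-absorption argument from scratch.

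Two places in your sketch need tightening. First, ``split $\Theta$ into finitely many proper open arcs'' is not literally available, since an open subset of $\mathbb S^1$ can have infinitely many connected components; what you should do instead is cover $\Theta$ by two open subsets $\Theta_1,\Theta_2\subset\Theta$ with $\overline{\Theta_i}\neq\mathbb S^1$ (always possible: intersect $\Theta$ with two overlapping proper open arcs whose union is $\mathbb S^1$), observe that the Mourre estimate on $\Theta$ restricts to each $\Theta_i$ with the same constant $a$ and compact $E^U(\Theta_i)KE^U(\Theta_i)$, and then glue the conclusions. Second, in your telescoping argument for the Dini condition on $[A,(U-\zeta_0)^{-1}]$, ``uniform norm-continuity'' of $t\mapsto\e^{-itA}(U-\zeta_0)^{-1}\e^{itA}$ is not by itself enough to make the two side terms integrable against $\d t/t$; what actually does the job is that $(U-\zeta_0)^{-1}\in C^1(A)$ yields the Lipschitz bound
$$
\big\|\e^{-itA}(U-\zeta_0)^{-1}\e^{itA}-(U-\zeta_0)^{-1}\big\|
\le|t|\,\big\|[A,(U-\zeta_0)^{-1}]\big\|,
$$
and it is this $O(|t|)$ estimate that makes those terms Dini-integrable. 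With these two fixes the argument goes through, including the strict case $K=0$ (your transferred compact operator is $E^U(\Theta)\big((U-\zeta_0)^{-1}\big)^*K(U-\zeta_0)^{-1}E^U(\Theta)$, which vanishes when $K$ does).
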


\begin{Remark}
If $\hspace{1pt}U=\e^{-iH}$ for some bounded self-adjoint operator $H\in C^{1+0}(A)$,
then one can use indifferently the self-adjoint or the unitary formulation of
commutator methods. Indeed, in such a case one has for each $\varphi\in\H$ that
$$
\big(U^*\e^{itA}U\e^{-itA}-1\big)\varphi
=\e^{iH}\big[\e^{itA},\e^{-iH}\big]\e^{-itA}\varphi
=i\int_0^1\d s\,\e^{isH}\int_0^t\d u\,\e^{iuA}[\hspace{1pt}iH,A]
\e^{i(t-u)A}\e^{-isH}\varphi
$$
which implies that
$$
\left\|\frac{U^*\e^{itA}U\e^{-itA}-1}t
-i\int_0^1\d s\,\e^{isH}[\hspace{1pt}iH,A]\e^{-isH}\right\|
\le\sup_{u\in[0,t]}
\big\|\e^{iuA}[\hspace{1pt}iH,A]\e^{i(t-u)A}-[\hspace{1pt}iH,A]\big\|.
$$
So, one infers that $U\in C^1(A)$ with
$U^*[A,U]=\int_0^1\d s\,\e^{isH}[\hspace{1pt}iH,A]\e^{-isH}$, which in turns implies
the inclusion $U\in C^{1+0}(A)$. Moreover, one has for any Borel set $J\subset\R$ the
equality
$$
E^H(J)=E^U(\Theta)
\quad\hbox{with}\quad
\Theta:=\big\{\e^{-i\lambda}\in\mathbb S^1\mid\lambda\in J\big\}.
$$
Therefore, one obtains the following equivalences of Mourre estimates:
\begin{align*}
&E^H(J)[\hspace{1pt}iH,A]E^H(J)\ge aE^H(J)+K
\quad\hbox{with $J\subset\R$ a bounded Borel set}\\
&\iff E^U(\Theta)\hspace{1pt}U^*[A,U]E^U(\Theta)\ge aE^U(\Theta)
+\int_0^1\d s\,\e^{isH}K\e^{-isH}
\quad\hbox{with $\Theta:=\big\{\e^{-i\lambda}\in\mathbb S^1\mid\lambda\in J\big\}$.}
\end{align*}
\end{Remark}

Now, suppose for a moment that there exists a self-adjoint operator $A$ with domain
$\dom(A)$ such that $U\in C^1(A)$ and $[A,U]=U$. Then, one has $U\in C^k(A)$ for each
$k\in\N$ and $U^*[A,U]=1$. In particular, $U\in C^{1+0}(A)$ and $U$ satisfies a
strict Mourre estimate on all of $\mathbb S^1$. Thus, Theorem \ref{thm_unitary}
applies and one deduces that $U$ has purely absolutely continuous spectrum. In fact,
one can check that the conditions $U\in C^1(A)$ and $[A,U]=U$ imply that
$\e^{-itA}U\e^{itA}=\e^{-it}U$ for each $t\in\R$. So, the operator $U$ is unitarily
equivalent to $\e^{-it}U$ for each $t\in\R$, and thus has purely Lebesgue spectrum
covering the whole circle $\mathbb S^1$. No need of commutator methods whatsoever.

But, now assume that the situation is more general in the sense that we are only able
to find a self-adjoint operator $A$ such that $U\in C^1(A)$ and
$[A,U]=UF+G\hspace{1pt}U$ for some self-adjoint operators $F,G\in\B(\H)$. In this
case, no simple trick permits to obtain Lebesgue spectrum (since it would be
obviously wrong). Moreover, we only get the relation
$$
U^*[A,U]=F+U^*G\hspace{1pt}U
$$
which do not lead to any explicit Mourre estimate, unless we impose some positivity
condition on the operator $F+U^*G\hspace{1pt}U$. Fortunately, in certain situations,
it is sufficient to modify appropriately the operator $A$ in order to get the desired
positivity. Let us explain how to proceed. Since $U\in C^1(A)$, we know from standard
results (see \cite[Prop.~5.1.5-5.1.6]{ABG}) that $U^k\in C^1(A)$ and
$U^k\hspace{1pt}\dom(A)=\dom(A)$ for each $k\in\Z$. Therefore, the operator
$\frac1n\sum_{k=0}^{n-1}U^{-k}\big[A,U^k\big]$ is bounded for each $n\in\N^*$, and
the operator
$$
A_n\hspace{1pt}\varphi
:=\frac1n\sum_{k=0}^{n-1}U^{-k}A\hspace{1pt}U^k\hspace{1pt}\varphi
=\frac1n\sum_{k=0}^{n-1}U^{-k}\big[A,U^k\big]\varphi
+A\hspace{1pt}\varphi,\quad\varphi\in\dom(A_n):=\dom(A),
$$
is self-adjoint. Furthermore, a simple calculation shows that $U\in C^1(A_n)$ with
\begin{equation}\label{average_Mourre}
U^*[A_n,U]
=U^*\hspace{1pt}\frac1n\sum_{k=0}^{n-1}U^{-k}\hspace{1pt}[A,U]\hspace{1pt}U^k
=\frac1n\sum_{k=0}^{n-1}U^{-k}F\hspace{1pt}U^k
+U^*\left(\frac1n\sum_{k=0}^{n-1}U^{-k}\hspace{1pt}G\hspace{1pt}U^k\right)U
\equiv F_n+U^*\hspace{1pt}G_n\hspace{1pt}U,
\end{equation}
which in turns implies that $U\in C^{1+0}(A_n)$ if the operators $F_n$ and $G_n$
satisfy
\begin{equation}\label{cond_G_n}
\int_0^1\frac{\d t}t\hspace{1pt}\big\|\e^{-itA_n}F_n\e^{itA_n}-F_n\big\|<\infty
\qquad\hbox{and}\qquad
\int_0^1\frac{\d t}t\hspace{1pt}\big\|\e^{-itA_n}G_n\e^{itA_n}-G_n\big\|<\infty.
\end{equation}
Now, even if $F+U^*G\hspace{1pt}U$ is not a strictly positive operator, the averaged
operator $F_n+U^*\hspace{1pt}G_n\hspace{1pt}U$ may converge in norm as $n\to\infty$
to a strictly positive operator. In such a case, the r.h.s. of \eqref{average_Mourre}
would be strictly positive for $n$ big enough. Accordingly, one would obtain a strict
Mourre estimate on all of $\mathbb S^1$, and thus conclude by Theorem
\ref{thm_unitary} that $U$ has purely absolutely continuous spectrum if $F_n$ and
$G_n$ satisfy \eqref{cond_G_n} (if the operator $A_n$ is bounded, one can skip the
verification of \eqref{cond_G_n} thanks to a theorem of C. R. Putnam, see
\cite[Thm.~2.3.2]{Put67}).

The convergence in norm of the averaged operator
$F_n+U^*\hspace{1pt}G_n\hspace{1pt}U$ is similar to the uniform convergence of
Birkhoff sums for uniquely ergodic transformations (it is also similar to the norm
convergence of Birkhoff sums for uniquely ergodic automorphisms of $C^*$-algebras, as
defined in \cite[Sec.~1]{AD09}). Therefore, it is quite natural to particularise the
previous construction to the case where $F$ and $G$ are multiplication operators and
$U$ is a unitary operator generated by a uniquely ergodic transformation. So, let
$T:X\to X$ be a uniquely ergodic homeomorphism on a compact metric space $X$ with
normalised Haar measure $\mu$, and let $U_T$ be the unitary Koopman operator in
$\H:=\ltwo(X,\mu)$ given by
$$
U_T:\H\to\H,\quad\varphi\mapsto\varphi\circ T.
$$
Furthermore, assume that there exists a self-adjoint operator $A$ such that
$U_T\in C^1(A)$ and $[A,U_T]=U_Tf+g\hspace{1pt}U_T$ for some functions
$f,g\in C(X;\R)$ (here we identify the functions $f$ and $g$ with the corresponding
multiplication operators). Then, \eqref{average_Mourre} reduces to
\begin{equation}\label{average_ergodic}
(U_T)^*[A_n,U_T]
=\frac1n\sum_{k=0}^{n-1}f\circ T^{-k}
+(U_T)^*\left(\frac1n\sum_{k=0}^{n-1}g\circ T^{-k}\right)U_T
\equiv f_n+(U_T)^*g_n\hspace{1pt}U_T.
\end{equation}
Since $T$ is uniquely ergodic, the Birkhoff sums $f_n$ and $g_n$ converge uniformly
to $\int_X\d\mu\,f$ and $\int_X\d\mu\,g$, respectively. So, if
$\int_X\d\mu\,(f+g)>0$, then the r.h.s. of \eqref{average_ergodic} is strictly
positive for $n$ big enough. Accordingly, one obtains a strict Mourre estimate on all
of $\mathbb S^1$, and one concludes by Theorem \ref{thm_unitary} that $U_T$ has
purely absolutely continuous spectrum if $f_n$ and $g_n$ satisfy
$$
\int_0^1\frac{\d t}t\hspace{1pt}\big\|\e^{-itA_n}f_n\e^{itA_n}-F_n\big\|<\infty
\qquad\hbox{and}\qquad
\int_0^1\frac{\d t}t\hspace{1pt}\big\|\e^{-itA_n}g_n\e^{itA_n}-G_n\big\|<\infty.
$$

Obviously, this last construction can be adapted to various other situations as when
one allows a compact perturbation, or when the r.h.s. of the identity
$[A,U_T]=U_Tf+g\hspace{1pt}U_T$ involves another combination of operators, or when
one has a continuous flow of homeomorphisms $\{T_t\}_{t\in\R}$ generating a strongly
continuous group of unitary operators $\{U_t\}_{t\in\R}$. In the latter case, it
might be more convenient to replace the discrete averages
$A_n=\frac1n\sum_{k=0}^{n-1}(U_T)^{-k}A\hspace{1pt}(U_T)^k$ by the continuous
averages $A_L:=\frac1L\int_0^L\d t\,U_{-t}A\hspace{1pt}U_{t}$ and to study the
generator $H$ of the group $\{U_{t}\}_{t\in\R}$ (instead of the group itself) using
the usual self-adjoint formulation of commutators methods.

\section{Time changes of horocycle flows}\label{sec_horo}
\setcounter{equation}{0}

Let $\Sigma$ be a compact Riemann surface of genus $\ge2$ and let $M:=T^1\Sigma$ be
the unit tangent bundle of $\Sigma$. The compact $3$-manifold $M$ carries a
probability measure $\mu_\Omega$ (induced by a canonical volume form $\Omega$) which
is preserved by  two distinguished one-parameter groups of
diffeomorphisms\hspace{1pt}: the horocycle flow $\{F_{1,t}\}_{t\in\R}$ and the
geodesic flow $\{F_{2,t}\}_{t\in\R}$. Both flows correspond to right translations on
$M$ when $M$ is identified with a homogeneous space $\Gamma\setminus{\sf PSL}(2;\R)$,
for some cocompact lattice $\Gamma$ in ${\sf PSL}(2;\R)$ (see \cite[Sec.~II.3 \&
Sec.~IV.1]{BM00}). We write $U_j(t)$ ($j=1,2$, $t\in\R$) for the operators given by
$$
U_j(t)\hspace{1pt}\varphi:=\varphi\circ F_{j,t},\quad\varphi\in C(M).
$$
One can check that the families $\{U_j(t)\}_{t\in\R}$ define strongly continuous
unitary groups in the Hilbert space $\H:=\ltwo(M,\mu_\Omega)$, and that  
$U_j(t)\hspace{1pt}C^\infty(M)\subset C^\infty(M)$ for each $t\in\R$. It follows from
Nelson's theorem \cite[Prop.~5.3]{Amr09} that the generator of the group
$\{U_j(t)\}_{t\in\R}$
$$
H_j\hspace{1pt}\varphi:=\slim_{t\to0}it^{-1}\big(U_j(t)-1\big)\varphi,
\quad\varphi\in\dom(H_j):=\left\{\varphi\in\H
\mid\lim_{t\to0}|t|^{-1}\big\|\big(U_j(t)-1\big)\varphi\big\|<\infty\right\},
$$
is essentially self-adjoint on $C^\infty(M)$, and one has
$$
H_j\hspace{1pt}\varphi:=-i\hspace{1pt}\L_{X_j}\varphi,\quad\varphi\in C^\infty(M),
$$
with $X_j$ the divergence-free vector field associated to $\{F_{j,t}\}_{t\in\R}$ and
$\L_{X_j}$ the corresponding Lie derivative.

It is a classical result that the horocycle flow $\{F_{1,t}\}_{t\in\R}$ is uniquely
ergodic \cite{Fur73} and mixing of all orders \cite{Mar78}, and that $U_1(t)$ has
countable Lebesgue spectrum for each $t\ne0$ (see \cite[Prop.~2.2]{KT06} and
\cite{Par53}). Moreover, the groups $\{U_1(t)\}_{t\in\R}$ and $\{U_2(t)\}_{t\in\R}$
satisfy the commutation relation
\begin{equation}\label{rel_com}
U_2(s)\hspace{1pt}U_1(t)\hspace{1pt}U_2(-s)=U_1(\e^st),\quad s,t\in\R,
\end{equation}
(here we consider the negative horocycle flow
$\{F_{1,t}\}_{t\in\R}\equiv\{F_{1,t}^-\}_{t\in\R}$, but everything we say can be
adapted to the positive horocycle flow by inverting a sign, see
\cite[Rem.~IV.1.2]{BM00}). By applying the strong derivative $i\hspace{1pt}\d/\d t$
at $t=0$ in \eqref{rel_com}, one gets that $U_2(s)H_1U_2(-s)\varphi=\e^sH_1\varphi$
for each $\varphi\in C^\infty(M)$. Since $C^\infty(M)$ is a core for $H_1$, one
infers that $H_1$ is $H_2$-homogeneous in the sense of \cite{BG91}; namely,
\begin{equation}\label{eq_homo1}
U_2(s)H_1U_2(-s)=\e^sH_1\quad\hbox{on}\quad\dom(H_1).
\end{equation}
It follows that $H_1$ is of class $C^\infty(H_2)$ with
\begin{equation}\label{com_H1H2}
\big[iH_1,H_2\big]=H_1.
\end{equation}

Now, consider a $C^1$ vector field with the same orientation and proportional to
$X_1$, that is, a vector field $fX_1$ with $f\in C^1\big(M;(0,\infty)\big)$. The
vector field $fX_1$ has the same integral curves as $X_1$, but with reparametrised
time coordinate. Indeed, it is known (see \cite[Sec.~1]{Hum74}) that the formula
$$
t=\int_0^{h(p,t)}\frac{\d s}{f\big(F_{1,s}(p)\big)}\hspace{1pt},\quad t\in\R,~p\in M,
$$
defines for each $p\in M$ a strictly increasing function $\R\ni t\mapsto h(p,t)\in\R$
satisfying $h(p,0)=0$ and $\lim_{t\to\pm\infty}h(p,t)=\pm\infty$. Furthermore, the
implicit function theorem implies that the map $t\mapsto h(p,t)$ is $C^1$ with
$\frac\d{\d t}h(p,t)=f\big(F_{1,h(p,t)}(p)\big)$. Therefore, the function
$\R\ni t\mapsto\widetilde F_{1,t}(p)\in M$ given by
$\widetilde F_{1,t}(p):=F_{1,h(p,t)}(p)$ satisfies the initial value problem 
$$
\frac\d{\d t}\hspace{1pt}\widetilde F_1(p,t)
=(fX_1)_{\widetilde F_1(p,t)}\hspace{1pt},
\quad\widetilde F_1(p,0)=p,
$$
meaning that $\{\widetilde F_{1,t}\}_{t\in\R}$ is the flow of $fX_1$ (note that
$\widetilde F_{1,t}(p)$ is of class $C^1$ in the $p$-variable and of class $C^2$ in
the $t$-variable as predicted by the general theory \cite[Sec.~2.1]{AM78}). Since the
divergence $\div_{\Omega/f}(fX_1)$ of $fX_1$ with respect to the volume form
$\Omega/f$ is zero, the operators
$$
\widetilde U_1(t)\hspace{1pt}\varphi:=\varphi\circ\widetilde F_{1,t}\hspace{1pt},
\quad\varphi\in C(M),
$$
define a strongly continuous unitary group $\{\widetilde U_1(t)\}_{t\in\R}$ in the
Hilbert space $\widetilde\H:=\ltwo(M,\mu_\Omega/f)$. The generator
$\widetilde H:=-i\hspace{1pt}\L_{fX_1}$ of $\{\widetilde U_1(t)\}_{t\in\R}$ is
essentially self-adjoint on $C^1(M)\subset\widetilde\H$ due to Nelson's theorem.

In the following lemma, we introduce two auxiliary operators which will be useful for
the spectral analysis of $\widetilde H$.

\begin{Lemma}\label{Lemma_a}
Let $f\in C^1\big(M;(0,\infty)\big)$, then
\begin{enumerate}
\item[(a)] the operator
$$
\U:\H\to\widetilde\H,\quad\varphi\mapsto f^{1/2}\varphi,
$$
is unitary, with adjoint $\U^*:\widetilde\H\to\H$ given by $\U^*\psi=f^{-1/2}\psi$,
\item[(b)] the symmetric operator
$$
H\varphi:=f^{1/2}H_1f^{1/2}\varphi,\quad\varphi\in C^1(M),
$$
is essentially self-adjoint in $\H$, and the closure of $H$ (which we denote by the
same symbol) is unitarily equivalent to $\widetilde H$,
\item[(c)] for each $z\in\C\setminus\R$, the operator $H_1+zf^{-1}$ is invertible
with bounded inverse, and satisfies
\begin{equation}\label{eq_res}
(H+z)^{-1}=f^{-1/2}\big(H_1+zf^{-1}\big)^{-1}f^{-1/2}.
\end{equation}
\end{enumerate}
\end{Lemma}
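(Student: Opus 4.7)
The plan is to address the three parts in order. Part (a) is elementary; part (b) is a clean unitary conjugation; and part (c) is an algebraic identity that reduces, modulo domain bookkeeping, to the invertibility of $H+z$ obtained from (b).

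For part (a), I would work directly from the definition of the measure on $\widetilde\H$: since $\d(\mu_\Omega/f)=f^{-1}\d\mu_\Omega$, a one-line change of variables gives $\|\U\varphi\|^2_{\widetilde\H}=\|\varphi\|^2_\H$, surjectivity is immediate because $f^{-1/2}$ is well-defined and bounded on the compact space $M$, and the adjoint formula is read off from $\langle\U\varphi,\psi\rangle_{\widetilde\H}=\int_M\overline{\varphi}\,f^{-1/2}\psi\,\d\mu_\Omega$.

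For part (b), the key observation is that the Lie derivative along a scalar multiple of a vector field satisfies $\L_{fX_1}\psi=f\,\L_{X_1}\psi$ on scalar functions, so on $C^1(M)$ one has $\widetilde H=fH_1$ as operators. Combined with (a), this gives $\U^*\widetilde H\,\U\varphi=f^{-1/2}\cdot f\cdot H_1(f^{1/2}\varphi)=f^{1/2}H_1 f^{1/2}\varphi=H\varphi$ for $\varphi\in C^1(M)$. Since multiplication by $f^{\pm1/2}$ preserves $C^1(M)$ (because $f\in C^1(M;(0,\infty))$), $\U$ restricts to a bijection of $C^1(M)$, so $H|_{C^1(M)}$ is unitarily equivalent to $\widetilde H|_{C^1(M)}$. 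Essential self-adjointness of $\widetilde H$ on $C^1(M)$ (by Nelson's theorem, as recalled just before the lemma) then transfers to $H$, and the closures are related by $\overline H=\U^*\widetilde H\,\U$.

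For part (c), I would start from the formal identity $H+z=f^{1/2}(H_1+zf^{-1})f^{1/2}$, valid on $C^1(M)$ by direct computation. Since $H$ is self-adjoint by (b), $(H+z)^{-1}\in\B(\H)$ for $z\in\C\setminus\R$, and I would set $R_z:=f^{1/2}(H+z)^{-1}f^{1/2}\in\B(\H)$. The claim reduces to the two identities $R_z(H_1+zf^{-1})=\id_{\dom(H_1)}$ and $(H_1+zf^{-1})R_z=\id_\H$, which are trivial on the dense subspace $C^1(M)$. The main obstacle is promoting them to the full natural domains: one must show that $f^{\pm 1/2}$ exchanges $\dom(H)$ and $\dom(H_1)$. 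For this I would run the standard density argument: given $\varphi\in\dom(H_1)$, choose $\varphi_n\in C^1(M)$ converging to $\varphi$ in the graph norm of $H_1$; then $(H+z)f^{-1/2}\varphi_n=f^{1/2}(H_1+zf^{-1})\varphi_n$ converges in $\H$, closedness of $H$ places $f^{-1/2}\varphi$ in $\dom(H)$ with the identity extending, and the symmetric argument yields the reverse inclusion. The crucial cores-within-cores point is that $C^1(M)$ is a core for $H_1$ (since it contains $C^\infty(M)$, on which $H_1$ is essentially self-adjoint) and that multiplication by $f^{\pm1/2}$ is a bounded bijection of $\H$, so the graph-norm approximations survive the conjugation.
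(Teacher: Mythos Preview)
Your proof is correct and follows essentially the same approach as the paper: part (a) is a direct computation, part (b) is the unitary conjugation $H=\U^*\widetilde H\,\U$ on $C^1(M)$ transferring essential self-adjointness from $\widetilde H$, and part (c) rests on the formal identity $H+z=f^{1/2}(H_1+zf^{-1})f^{1/2}$ on $C^1(M)$ extended by density and closedness. The only cosmetic difference is in (c): the paper first establishes invertibility of $H_1+zf^{-1}$ via the lower bound $\|(H_1+zf^{-1})\varphi\|\ge(\inf_M f^{-1})\,|\im z|\,\|\varphi\|$ (obtained from $\|(H+z)\psi\|\ge|\im z|\,\|\psi\|$) and then verifies \eqref{eq_res} on the dense set $(H+z)C^1(M)$, whereas you build the candidate inverse $R_z=f^{1/2}(H+z)^{-1}f^{1/2}$ first and check it is a two-sided inverse through the domain-exchange $f^{\pm1/2}\colon\dom(H_1)\leftrightarrow\dom(H)$.
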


\begin{proof}
Point (a) follows from a direct calculation taking into account the boundedness of
$f$ from below and from above. For (b), observe that
$$
H\varphi
=f^{-1/2}fH_1f^{1/2}\varphi
=\U^*\widetilde H\U\varphi
$$
for each $\varphi\in\U^*C^1(M)$. So, $H$ is essentially self-adjoint on
$\U^*C^1(M)\equiv C^1(M)$, and the closure of $H$ is unitarily equivalent to
$\widetilde H$. To prove (c), take $z\equiv\lambda+i\mu\in\C\setminus\R$,
$\varphi\in\dom\big(H_1+zf^{-1}\big)\equiv\dom(H_1)$ and
$\{\varphi_n\}\subset C^\infty(M)$ such that
$\lim_n\|\varphi-\varphi_n\|_{\dom(H_1)}=0$. Then, it follows from (b) that
$$
\big\|\big(H_1+zf^{-1}\big)\varphi\big\|^2
=\lim_n\big\|f^{-1/2}(H+z)f^{-1/2}\varphi_n\big\|^2
\ge\inf_{p\in M}f^{-2}(p)\hspace{1pt}\mu^2\big\|\varphi\big\|^2,
$$
and thus $H_1+zf^{-1}$ is invertible with bounded inverse (see
\cite[Lemma~3.1]{Amr09}). Now, to show \eqref{eq_res}, take $\psi=(H+z)\zeta$ with
$\zeta\in C^1(M)$, observe that
\begin{equation}\label{eq_res_bis}
(H+z)^{-1}\psi-f^{-1/2}\big(H_1+zf^{-1}\big)^{-1}f^{-1/2}\psi=0,
\end{equation}
and then use the density of $(H+z)C^1(M)$ in $\H$ to extend the identity
\eqref{eq_res_bis} to all of $\H$.
\end{proof}

The operators $H$ and $\widetilde H$ are unitarily equivalent due to Lemma
\ref{Lemma_a}(b). Therefore, one can either work with $H$ in $\H$ or with
$\widetilde H$ in $\widetilde\H$ to determine the spectral properties associated with
the time change $fX_1$. For convenience, we present our results for the operator $H$.
We start by proving some regularity properties of $f$ and $H$ with respect to $H_2$.
The function
$$
g:=\frac12-\frac12\hspace{1pt}\L_{X_2}\big(\ln(f)\big)
$$
pops up naturally\hspace{1pt}:

\begin{Lemma}\label{lem_regu}
Let $f\in C^1\big(M;(0,\infty)\big)$, $\alpha\in\R$ and $z\in\C\setminus\R$. Then,
\begin{enumerate}
\item[(a)] the multiplication operator $f^\alpha$ satisfies $f^\alpha\in C^1(H_2)$
with $\big[if^\alpha,H_2\big]=-\alpha\hspace{1pt}f^\alpha\L_{X_2}\big(\ln(f)\big)$,

\item[(b)] $(H+z)^{-1}\in C^1(H_2)$ with
$\big[i(H+z)^{-1},H_2\big]=-(H+z)^{-1}(Hg+gH)(H+z)^{-1}$.
\end{enumerate}
\end{Lemma}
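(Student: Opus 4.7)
For part (a), I would verify the commutator identity on the core $C^\infty(M)$ of $H_2$, where $H_2 = -i\,\L_{X_2}$ acts classically. A direct Leibniz calculation using $\L_{X_2}(f^\alpha) = \alpha f^\alpha\,\L_{X_2}(\ln f)$ yields
$$
[if^\alpha, H_2]\varphi = f^\alpha\L_{X_2}\varphi - \L_{X_2}(f^\alpha\varphi) = -\alpha f^\alpha\,\L_{X_2}(\ln f)\,\varphi,\quad \varphi\in C^\infty(M).
$$
Since $M$ is compact and $f^\alpha\,\L_{X_2}(\ln f)\in C(M)$, the right-hand side is a bounded multiplication operator on $\H$, so the defining sesquilinear form extends continuously from $C^\infty(M)$ to $\dom(H_2)$, establishing $f^\alpha\in C^1(H_2)$ with the stated commutator.

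For part (b), the plan is first to show that $H$ is of class $C^1(H_2)$ with $[iH, H_2] = Hg + gH$ (in the form sense), and then to invoke the resolvent identity \eqref{2com}. I would work on $C^\infty(M)$, which lies in $\dom(H_2)$ by essential self-adjointness of $H_2$ on $C^\infty(M)$ and is also a core for $H$: since $H\varphi = f^{1/2}H_1 f^{1/2}\varphi$ involves only first derivatives of $\varphi$, the standard $C^1$-density of $C^\infty(M)$ in the essential core $C^1(M)$ provided by Lemma \ref{Lemma_a}(b) gives density in the $H$-graph norm. On this core I would use the Leibniz expansion
$$
[iH, H_2] = if^{1/2}\big[H_1 f^{1/2}, H_2\big] + \big[if^{1/2}, H_2\big]H_1 f^{1/2},
$$
split $[H_1 f^{1/2}, H_2] = H_1[f^{1/2}, H_2] + [H_1, H_2]f^{1/2}$, and insert part (a) at $\alpha = 1/2$ together with the homogeneity relation \eqref{com_H1H2}. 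Exploiting the mutual commutativity of multiplication operators, the three contributions collapse to
$$
[iH, H_2] = H - \tfrac{1}{2}H\,\L_{X_2}(\ln f) - \tfrac{1}{2}\L_{X_2}(\ln f)\,H = Hg + gH.
$$

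Since $g\in\linf(M)$, this form identity is bounded by $2\|g\|_\infty\|H\varphi\|\,\|\varphi\|$ on $C^\infty(M)$ and is therefore continuous in the $H$-graph norm; by \cite[Thm.~6.2.10(b)]{ABG} this yields $H\in C^1(H_2)$ and hence $(H+z)^{-1}\in C^1(H_2)$. Specializing the general identity \eqref{2com} with $[H, H_2] = -i(Hg + gH)$ (interpreted via its continuous extension $\dom(H)\to\dom(H)^*$) and multiplying by $i$ delivers
$$
[i(H+z)^{-1}, H_2] = -(H+z)^{-1}(Hg + gH)(H+z)^{-1},
$$
as claimed. The main difficulty here will be organizational rather than analytic: tracking the signs carefully and interpreting $Hg + gH$ correctly as a bounded form on $\dom(H)$ in the final substitution.
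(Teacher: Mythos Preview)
Your treatment of part (a) is essentially identical to the paper's.

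For part (b) you take a genuinely different route. The paper never computes $[iH,H_2]$ on a core; instead it works directly with the bounded operator $(H+z)^{-1}$, writing it via the explicit formula \eqref{eq_res} as $f^{-1/2}(H_1+zf^{-1})^{-1}f^{-1/2}$ and differentiating $\e^{-itH_2}(H+z)^{-1}\e^{itH_2}$ at $t=0$ using the group-level homogeneity \eqref{eq_homo1} together with part (a). This yields both the $C^1(H_2)$ membership and the commutator formula in a single computation that stays entirely within $\B(\H)$. Your approach---compute $[iH,H_2]=Hg+gH$ as a form on $C^\infty(M)$ via Leibniz and \eqref{com_H1H2}, then pass to the resolvent by \eqref{2com}---is algebraically correct and conceptually cleaner, but it has a gap at the step ``form bound $\Rightarrow H\in C^1(H_2)$''. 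The reference \cite[Thm.~6.2.10(b)]{ABG} is cited in the paper precisely for the \emph{opposite} implication; for unbounded $H$ the converse requires an additional domain condition, for instance that $\e^{-itH_2}$ preserve $\dom(H)$ (equivalently, $(H+z)^{-1}\dom(H_2)\subset\dom(H_2)$). That condition does hold here---one checks $\dom(H)=\dom(H_1)$ by comparing graph norms on $C^1(M)$, and then \eqref{eq_homo1} gives the invariance---but you need to say so explicitly and invoke the correct criterion. The paper's direct resolvent computation buys you freedom from this abstract machinery; your approach buys a more memorable formula $[iH,H_2]=Hg+gH$ at the cost of that extra verification.
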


\begin{proof}
(a) The chain rule for Lie derivatives and the strict positivity of $f$ imply that
$$
\L_{X_2}(f^\alpha)
=\alpha\hspace{1pt}f^{\alpha-1}\L_{X_2}(f)
=\alpha\hspace{1pt}f^\alpha\L_{X_2}\big(\ln(f)\big).
$$
Thus, one has for each $\varphi\in C^\infty(M)$
$$
\big\langle\varphi,if^\alpha H_2\hspace{1pt}\varphi\big\rangle
-\big\langle H_2\hspace{1pt}\varphi,if^\alpha\varphi\big\rangle
=\big\langle\varphi,\big[if^\alpha,H_2\big]\varphi\big\rangle
=\big\langle\varphi,-\alpha\hspace{1pt}f^\alpha\L_{X_2}\big(\ln(f)\big)
\varphi\big\rangle\hspace{1pt}.
$$
Since $f^\alpha\L_{X_2}\big(\ln(f)\big)\in\linf(M)$, it follows by the density of
$C^\infty(M)$ in $\dom(H_2)$ that $f^\alpha\in C^1(H_2)$ with
$\big[if^\alpha,H_2\big]=-\alpha\hspace{1pt}f^\alpha\L_{X_2}\big(\ln(f)\big)$.

(b) Let $t\in\R$ and $\varphi\in\H$. Then, one infers from Equations \eqref{eq_homo1}
and \eqref{eq_res} that
$$
\e^{-itH_2}(H+z)^{-1}\e^{itH_2}\varphi
=\e^{-itH_2}f^{-1/2}\e^{itH_2}\big(\e^tH_1+z\e^{-itH_2}f^{-1}\e^{itH_2}\big)^{-1}
\e^{-itH_2}f^{-1/2}\e^{itH_2}\varphi.
$$
So, one gets from point (a), Equation \eqref{eq_res} and Lemma \ref{Lemma_a}(b) that
\begin{align*}
&\frac\d{\d t}\hspace{1pt}\e^{-itH_2}(H+z)^{-1}\e^{itH_2}\varphi\Big|_{t=0}\\
&=\big[if^{-1/2},H_2\big]\big(H_1+zf^{-1}\big)^{-1}f^{-1/2}\varphi
+f^{-1/2}\big(H_1+zf^{-1}\big)^{-1}\big[if^{-1/2},H_2\big]\varphi\\
&\qquad-f^{-1/2}\big(H_1+zf^{-1}\big)^{-1}\big(H_1+z\big[if^{-1},H_2\big]\big)
\big(H_1+zf^{-1}\big)^{-1}f^{-1/2}\varphi\\
&=\frac12\hspace{1pt}\L_{X_2}\big(\ln(f)\big)(H+z)^{-1}\varphi
+\frac12\hspace{1pt}(H+z)^{-1}\L_{X_2}\big(\ln(f)\big)\varphi\\
&\qquad-(H+z)^{-1}\big\{H+z\hspace{1pt}\L_{X_2}\big(\ln(f)\big)\big\}(H+z)^{-1}\varphi\\
&=\frac12\hspace{1pt}(H+z)^{-1}H\hspace{1pt}\L_{X_2}\big(\ln(f)\big)(H+z)^{-1}\varphi
+\frac12\hspace{1pt}(H+z)^{-1}\L_{X_2}\big(\ln(f)\big)H(H+z)^{-1}\varphi\\
&\qquad-(H+z)^{-1}H(H+z)^{-1}\varphi\\
&=-(H+z)^{-1}(Hg+gH)(H+z)^{-1}\varphi,
\end{align*}
which implies the claim.
\end{proof}

In \cite{Tie12} we used the operator $H_2$ as a conjugate operator for $H$ (in fact
for $H^2$). This led us to impose, as A. G. Kushnirenko in \cite[Thm.~2]{Kus74}, the
strict positivity of the function $g$ in order to get at some point a strict Mourre
estimate. Here, we will show that this can be avoided if one uses a conjugate
operator taking into account the unique ergodicity of the horocycle flow
$\{F_{1,t}\}_{t\in\R}$\hspace{1pt}, as presented in Section \ref{Sec_com}. We start
with the definition of the new conjugate operator. We use for $L>0$ the notations
$g_L$ and $\widetilde{g_L}$ for the following averages of $g$ along the time-changed
flow $\{\widetilde F_{1,t}\}_{t\in\R}$\hspace{1pt}:
$$
g_L:=\frac1L\int_0^L\d t\,\big(g\circ\widetilde F_{1,-t}\big)
\qquad\hbox{and}\qquad
\widetilde{g_L}:=\frac1L\int_0^L\d t\int_0^t\d s\,
\big(g\circ\widetilde F_{1,-s}\big).
$$

\begin{Lemma}[Conjugate operator]\label{Lem_conj}
Let $f\in C^1\big(M;(0,\infty)\big)$ and $L>0$.
\begin{enumerate}
\item[(a)] For each $\varphi\in C^1(M)$, one has the equality
$$
\frac1L\int_0^L\d t\,\e^{itH}H_2\e^{-itH}\varphi
=\textstyle-i\big(\L_X+\frac12\div_\Omega X\big)\varphi,
$$
with $X:=X_2+2\hspace{1pt}\widetilde{g_L}fX_1$ and
$\div_\Omega X=2\hspace{1pt}\widetilde{g_L}\L_{X_1}(f)+2(g-g_L)$ the divergence of
$X$ relative to the volume form $\Omega$.
\item[(b)] If $f\in C^3\big(M;(0,\infty)\big)$, then the operator
$$
A_L\varphi:=\frac1L\int_0^L\d t\,\e^{itH}H_2\e^{-itH}\varphi,
\quad\varphi\in C^1(M),
$$
is essentially self-adjoint in $\H$ (and its closure is denoted by the same symbol).
\end{enumerate}
\end{Lemma}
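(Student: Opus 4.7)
My plan is to compute the averaged operator in (a) by a direct commutator calculation, and then read off (b) from the vector-field description of the result via a standard Nelson-type argument.

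For (a), the starting observation is that Lemma \ref{lem_regu}(b) combined with the resolvent identity \eqref{2com} yields $[iH,H_2]=Hg+gH$ on a suitable core. From this,
$$
\frac{\d}{\d t}\hspace{1pt}\e^{itH}H_2\e^{-itH}
=\e^{itH}(Hg+gH)\e^{-itH}
=H\hspace{1pt}\e^{itH}g\e^{-itH}+\e^{itH}g\e^{-itH}\hspace{1pt}H,
$$
because $H$ commutes with $\e^{itH}$. The unitary equivalence of $H$ with $\widetilde H=-i\L_{fX_1}$ through the multiplication operator $\U=f^{1/2}$ implies that $\e^{isH}g\e^{-isH}$ reduces to the multiplication operator by $g$ pulled back along the time-changed horocycle flow. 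Integrating in $t$, averaging over $[0,L]$, and identifying the resulting iterated integral with the definition of $\widetilde{g_L}$ then gives
$$
\frac{1}{L}\int_0^L\d t\,\e^{itH}H_2\e^{-itH}\varphi
=H_2\varphi+H\widetilde{g_L}\varphi+\widetilde{g_L}H\varphi,\qquad\varphi\in C^1(M).
$$

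To identify this with $-i(\L_X+\tfrac12\div_\Omega X)$, I would substitute the explicit expression $H\varphi=-i(\L_{fX_1}+\tfrac12\div_\Omega(fX_1))\varphi$, obtained by expanding $H=f^{1/2}H_1f^{1/2}$ on $C^1(M)$, and apply the Leibniz rule to the terms $H\widetilde{g_L}\varphi$ and $\widetilde{g_L}H\varphi$. The only non-trivial identity needed is $\L_{fX_1}(\widetilde{g_L})=g-g_L$, which I would verify by interchanging the Lie derivative with the iterated integral defining $\widetilde{g_L}$ and using the chain-rule relation $\L_{fX_1}(g\circ\widetilde F_{1,-s})=-\frac{\d}{\d s}(g\circ\widetilde F_{1,-s})$ along the flow of $fX_1$. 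Together with $\div_\Omega(fX_1)=\L_{X_1}(f)$ and $\div_\Omega X_2=0$, the scalar contributions collect precisely into $-\tfrac{i}{2}\div_\Omega X$ with $X=X_2+2\widetilde{g_L}fX_1$ and $\div_\Omega X=2\widetilde{g_L}\L_{X_1}(f)+2(g-g_L)$.

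For (b), under $f\in C^3(M;(0,\infty))$ the function $g=\tfrac12-\tfrac12\L_{X_2}(\ln f)$ is $C^2$; since $fX_1\in C^3$, the flow $\widetilde F_{1,\cdot}$ is $C^3$ in the space variable, hence $\widetilde{g_L}\in C^2(M)$ and $X\in C^2(M;TM)$. The operator $A_L$ from (a) is then the formal symmetric first-order differential operator associated with the complete $C^2$ vector field $X$ on the compact manifold $M$: it is the generator on $C^1(M)$ of the strongly continuous unitary group $V(t)\varphi=j_t^{1/2}\cdot(\varphi\circ\Psi_t)$ on $\H$, where $\Psi_t$ is the flow of $X$ and $j_t$ the Jacobian of $\Psi_{-t}$ relative to $\mu_\Omega$. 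Since $X\in C^2$ forces $\Psi_t\in C^2$ and $j_t^{1/2}\in C^1$, the subspace $C^1(M)$ is $V$-invariant, and Nelson's theorem---applied exactly as for $H_j$ at the beginning of Section~\ref{sec_horo}---yields the announced essential self-adjointness. The main obstacle I expect is the bookkeeping of the vector-field translation in (a), in particular making sure that the scalar remainder from the two Leibniz expansions consolidates exactly into $-\tfrac i2\div_\Omega X$ and that the sign conventions in $\e^{isH}g\e^{-isH}$ are consistent with those used in the definitions of $g_L$ and $\widetilde{g_L}$.
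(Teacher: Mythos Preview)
Your plan matches the paper's proof almost step for step: the key formula
$\frac1L\int_0^L\d t\,\e^{itH}H_2\e^{-itH}\varphi=H_2\varphi+(H\widetilde{g_L}+\widetilde{g_L}H)\varphi$,
the identity $\L_{fX_1}(\widetilde{g_L})=g-g_L$, the rewriting as $-i(\L_X+\tfrac12\div_\Omega X)$, and the Nelson argument in~(b) via the flow of the $C^2$ vector field $X$ are exactly what the paper does.

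The one point where your sketch is thinner than the paper is the justification of the derivative
$\frac{\d}{\d t}\,\e^{itH}H_2\e^{-itH}\varphi=\e^{itH}(Hg+gH)\e^{-itH}\varphi$.
Both $H$ and $H_2$ are unbounded, and under the sole hypothesis $f\in C^1$ one cannot simply invoke a product rule: for instance $H\e^{-itH}\varphi$ lies only in $C^0(M)$, which is not contained in $\dom(H_2)$, so the commutator $[iH,H_2]$ is not a priori available as an operator identity on a common invariant core. The paper circumvents this by passing to a weak formulation against $\psi\in\dom(H)$, replacing $H_2$ by the bounded approximants $H_2(\tau)=(i\tau)^{-1}(\e^{i\tau H_2}-1)$, and using Lemma~\ref{lem_regu}(b) in the form $\slim_{\tau\searrow0}[i(H-i)^{-1},H_2(\tau)]=-(H-i)^{-1}(Hg+gH)(H-i)^{-1}$ together with the resolvent sandwiching $(H+i)^{-1}(\,\cdot\,)(H-i)^{-1}$. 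Your ``on a suitable core'' hides precisely this regularization, and it is the only place where additional work beyond your outline is required.
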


\begin{proof}
(a) We start by collecting some information on the function $\widetilde{g_L}$. For
each $s\in\R$, we have
$$
\e^{isH}g\e^{-isH}
=\e^{is\hspace{1pt}\U^*\widetilde H\U}g\e^{-is\hspace{1pt}\U^*\widetilde H\U}
=\U^*\e^{is\widetilde H}g\e^{-is\widetilde H}\U
=g\circ\widetilde F_{1,-s}\hspace{1pt}.
$$
Thus,
$$
\frac1L\int_0^L\d t\int_0^t\d s\,\e^{isH}g\e^{-isH}
=\frac1L\int_0^L\d t\int_0^t\d s\,\big(g\circ\widetilde F_{1,-s}\big)
=\widetilde{g_L}
$$
and
\begin{equation}\label{eq_g_L}
\L_{fX_1}(\widetilde{g_L})
=\frac\d{\d\tau}\frac1L\int_0^L\d t\int_0^t\d s\,
\big(g\circ\widetilde F_{1,-s}\circ\widetilde F_{1,\tau}\big)\bigg|_{\tau=0}
=\frac1L\int_0^L\d t\,\frac\d{\d\tau}\int_{\tau-t}^\tau\d u\,
\big(g\circ\widetilde F_{1,u}\big)\bigg|_{\tau=0}
=g-g_L\hspace{1pt}.
\end{equation}
This implies that $H\widetilde{g_L}\varphi\in\H$ for each $\varphi\in C^1(M)$ since
\begin{equation}\label{dans_dom}
H\widetilde{g_L}\varphi
=\widetilde{g_L}H\varphi+\big[H,\widetilde{g_L}\big]\varphi
=\widetilde{g_L}H\varphi-i\L_{fX_1}(\widetilde{g_L})\varphi
=\widetilde{g_L}H\varphi-i(g-g_L)\hspace{1pt}\varphi.
\end{equation}

Now, take $\varphi\in C^1(M)$ and $\psi\in\dom(H)$, and set
$H_2(\tau):=(i\tau)^{-1}\big(\e^{i\tau H_2}-1\big)$ for each $\tau\in\R$. Then, one
has the equalities
\begin{align*}
&\bigg\langle\psi,\bigg(\frac1L\int_0^L\d t\,\e^{itH}H_2(\tau)\e^{-itH}
-H_2(\tau)\bigg)\varphi\bigg\rangle\\
&=\bigg\langle(H-i)\hspace{1pt}\psi,\frac1L\int_0^L\d t\int_0^t\d s\,
\frac\d{\d s}\hspace{1pt}\e^{isH}(H+i)^{-1}H_2(\tau)(H-i)^{-1}\e^{-isH}(H-i)
\hspace{1pt}\varphi\bigg\rangle\\
&=\bigg\langle(H-i)\hspace{1pt}\psi,\frac1L\int_0^L\d t\int_0^t\d s\,\e^{isH}
(H+i)^{-1}\big[iH,H_2(\tau)\big](H-i)^{-1}\e^{-isH}(H-i)
\hspace{1pt}\varphi\bigg\rangle\\
&=\bigg\langle(H-i)\hspace{1pt}\psi,-\frac1L\int_0^L\d t\int_0^t\d s\,
\e^{isH}(H-i)(H+i)^{-1}\big[i(H-i)^{-1},H_2(\tau)\big]\e^{-isH}(H-i)
\hspace{1pt}\varphi\bigg\rangle.
\end{align*}
But, we know from Lemma \ref{lem_regu}(b) that 
$\slim_{\tau\searrow0}\big[i(H-i)^{-1},H_2(\tau)\big]=-(H-i)^{-1}(Hg+gH)(H-i)^{-1}$
and we know from \eqref{dans_dom} that $H\widetilde{g_L}\hspace{1pt}\varphi\in\H$.
So, one obtains that
\begin{align*}
&\bigg\langle\psi,\frac1L\int_0^L\d t\,\e^{itH}H_2\e^{-itH}\varphi
-H_2\hspace{1pt}\varphi\bigg\rangle\\
&=\bigg\langle(H-i)\hspace{1pt}\psi,\frac1L\int_0^L\d t\int_0^t\d s\,
\e^{isH}(H+i)^{-1}(Hg+gH)(H-i)^{-1}\e^{-isH}(H-i)\hspace{1pt}\varphi\bigg\rangle\\
&=\big\langle(H-i)\hspace{1pt}\psi,(H+i)^{-1}\big(H\widetilde{g_L}
+\widetilde{g_L}H\big)(H-i)^{-1}(H-i)\hspace{1pt}\varphi\big\rangle\\
&=\big\langle\psi,\big(H\widetilde{g_L}
+\widetilde{g_L}H\big)\varphi\big\rangle\hspace{1pt},
\end{align*}
which implies the equality
$$
\frac1L\int_0^L\d t\,\e^{itH}H_2\e^{-itH}\varphi
=H_2\hspace{1pt}\varphi+\big(H\widetilde{g_L}+\widetilde{g_L}H\big)\varphi,
$$
due to the density of $\dom(H)$ in $\H$. Now, the equations
$\div_\Omega(X_1)=\div_\Omega(X_2)=0$ and \eqref{eq_g_L} imply that
$$
\div_\Omega X
=\div_\Omega X_2+\div_\Omega(2\hspace{1pt}\widetilde{g_L}fX_1)
=2\hspace{1pt}\widetilde{g_L}\L_{X_1}(f)+2f\L_{X_1}(\widetilde{g_L})
=2\hspace{1pt}\widetilde{g_L}\L_{X_1}(f)+2(g-g_L).
$$
So, one infers that
\begin{align*}
\frac1L\int_0^L\d t\,\e^{itH}H_2\e^{-itH}\varphi
&=\big(-i\hspace{1pt}\L_{X_2}+2\hspace{1pt}\widetilde{g_L}H
+\big[H,\widetilde{g_L}\big]\big)\varphi\\
&=\big(-i\hspace{1pt}\L_{X_2}+2\hspace{1pt}\widetilde{g_L}f^{1/2}H_1f^{1/2}
-if\L_{X_1}(\widetilde{g_L})\big)\varphi\\
&=-i\big(\L_{X_2}+2\hspace{1pt}\widetilde{g_L}f\L_{X_1}
+\widetilde{g_L}\hspace{1pt}\L_{X_1}(f)+f\L_{X_1}(\widetilde{g_L})\big)\varphi\\
&=\textstyle-i\big(\L_X+\frac12\div_\Omega X\big)\varphi,
\end{align*}
which proves the claim.

(b) If $f\in C^3\big(M;(0,\infty)\big)$, then $X$ is a $C^2$ vector field on the
compact manifold $M$ , and thus $X$ admits a complete flow $\{F_t\}_{t\in\R}$ with
$F_t(p)$ of class $C^2$ in the $p$-variable \cite[Sec.~2.1]{AM78}. For each $t\in\R$,
let $\det_\Omega(F_t)\in C^1(M;\R)$ be the unique function satisfying
$F_t^*\Omega=\det_\Omega(F_t)\Omega$ \cite[Def.~2.5.18]{AM78}. Since $F_0$ is the
identity map, we have $\det_\Omega(F_0)=1$ and thus $\det_\Omega(F_t)>0$ for all
$t\in\R$ by continuity of $F_t(p)$ in the $t$-variable (see \cite[Prop.~2.5.19 \&
2.5.20(ii)]{AM78}). In particular, one can define for each $t\in\R$ the operator
$$
U(t)\hspace{1pt}\varphi
:=\{\det_\Omega(F_t)\}^{1/2}\hspace{1pt}\varphi\circ F_t\hspace{1pt},
\quad\varphi\in C(M).
$$
Some routine computations using \cite[Prop.~2.5.20]{AM78} show that
$\{U(t)\}_{t\in\R}$ defines a strongly continuous unitary group in $\H$ satisfying
$U(t)C^1(M)\subset C^1(M)$ for each $t\in\R$. Thus, it follows from Nelson's theorem
that the generator $D$ of the group $\{U(t)\}_{t\in\R}$ is essentially self-adjoint
on $C^1(M)$. Furthermore, standard computations (see \cite[Sec.~5.4]{AM78}) show that
$$
D\hspace{1pt}\varphi=\textstyle-i\big(\L_X+\frac12\div_\Omega X\big)\varphi
$$
for each $\varphi\in C^1(M)$. This, together with point (a), shows that the operators
$D$ and $A_L$ coincide on $C^1(M)$, and thus that $A_L$ is essentially self-adjoint
on $C^1(M)$.
\end{proof}

\begin{Remark}
We believe it might be possible to prove the essential self-adjointness of the
operator $A_L$ for time changes of class $C^2$, instead of time changes of class
$C^3$ as presented in Lemma \ref{Lem_conj}. Doing so, one would extend all the
results of this section to time changes of class $C^2$, since Lemma \ref{Lem_conj} is
the only instance where a regularity assumption stronger than $C^2$ is needed.
\end{Remark}

We now prove regularity properties of $H$ and $H^2$ with respect to $A_L$.

\begin{Lemma}\label{Lem_reg_A_L}
Let $f\in C^3\big(M;(0,\infty)\big)$, $L>0$ and $z\in\C\setminus\R$. Then,
\begin{enumerate}
\item[(a)] $(H+z)^{-1}\in C^1(A_L)$ with
$$
\big[i(H+z)^{-1},A_L\big]=-(H+z)^{-1}\big(Hg_L+g_LH\big)(H+z)^{-1},
$$

\item[(b)] $(H^2+1)^{-1}\in C^1(A_L)$ with
$$
\big[i(H^2+1)^{-1},A_L\big]
=-(H^2+1)^{-1}\big(H^2g_L+2Hg_LH+g_LH^2\big)(H^2+1)^{-1},
$$

\item[(c)] the multiplication operator $g_L$ satisfies $g_L\in C^1(A_L)$ with
$\big[ig_L,A_L\big]=-\L_X(g_L)$,

\item[(d)] $(H^2+1)^{-1}\in C^2(A_L)$.
\end{enumerate}
\end{Lemma}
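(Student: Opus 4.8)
The plan is to treat point (a) as the core computation and then obtain (b), (c) and (d) from it by algebraic manipulations together with the standard stability properties of the classes $C^k(A_L)$: that $C^1(A_L)$ is an algebra, that the Leibniz rule $[ST,A_L]=S[T,A_L]+[S,A_L]T$ holds for $S,T\in C^1(A_L)$, and that $S\in C^2(A_L)$ as soon as $S\in C^1(A_L)$ and $[S,A_L]\in C^1(A_L)$.

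For (a), the essential observation is that $(H+z)^{-1}$ commutes with $\e^{itH}$. Conjugating the identity of Lemma \ref{lem_regu}(b) by $\e^{itH}$ then shows that, for each fixed $t\in\R$, $(H+z)^{-1}\in C^1\big(\e^{itH}H_2\e^{-itH}\big)$ with commutator $\e^{itH}\big[(H+z)^{-1},H_2\big]\e^{-itH}$. Since $A_L\varphi=\frac1L\int_0^L\d t\,\e^{itH}H_2\e^{-itH}\varphi$ for $\varphi$ in the core $C^1(M)$ (Lemma \ref{Lem_conj}), I would then evaluate, for $\varphi\in C^1(M)$, the expression $\big\langle\varphi,i(H+z)^{-1}A_L\varphi\big\rangle-\big\langle A_L\varphi,i(H+z)^{-1}\varphi\big\rangle$ by moving the factors $\e^{\pm itH}$ through the resolvents, using the identity $\e^{isH}g\e^{-isH}=g\circ\widetilde F_{1,-s}$ from the proof of Lemma \ref{Lem_conj}(a) together with the definition $g_L=\frac1L\int_0^L\d t\,(g\circ\widetilde F_{1,-t})$. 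This reduces the form to $-\big\langle\varphi,(H+z)^{-1}(Hg_L+g_LH)(H+z)^{-1}\varphi\big\rangle$; since the latter is a bounded form and $C^1(M)$ is a core for $A_L$, one concludes $(H+z)^{-1}\in C^1(A_L)$ with the stated commutator. To keep the manipulation of the unbounded operator $H_2$ and of $\e^{-itH}$ rigorous, I would perform the computation with $H_2$ replaced by the bounded operators $H_2(\tau):=(i\tau)^{-1}\big(\e^{i\tau H_2}-1\big)$, taking the limit $\tau\searrow0$ only at the end, exactly as in the proof of Lemma \ref{Lem_conj}(a), Lemma \ref{lem_regu}(b) providing the strong limit of $\big[(H+z)^{-1},H_2(\tau)\big]$.

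Point (b) then follows by writing $(H^2+1)^{-1}=(H-i)^{-1}(H+i)^{-1}$ and applying the Leibniz rule (available since $(H\mp i)^{-1}\in C^1(A_L)$ by (a)); inserting the two commutators from (a) and simplifying with the resolvent identities $(H+i)^{-1}=(H^2+1)^{-1}(H-i)$ and $(H-i)^{-1}=(H^2+1)^{-1}(H+i)$, the terms $\pm i(Hg_L+g_LH)$ cancel and one is left with $H^2g_L+2Hg_LH+g_LH^2$ between the two resolvents $(H^2+1)^{-1}$. Point (c) is a direct computation on $C^1(M)$: using the expression $A_L=-i\big(\L_X+\frac12\div_\Omega X\big)$ from Lemma \ref{Lem_conj}(a), the commutator of the multiplication operator $g_L$ with $A_L$ equals $i\L_X(g_L)$ (the contribution of $\frac12\div_\Omega X$ drops out, being a multiplication operator), and since $g_L\in C^1(M)$ and $X$ is continuous on the compact manifold $M$, $\L_X(g_L)\in\linf(M)$; density of $C^1(M)$ in $\dom(A_L)$ then gives $g_L\in C^1(A_L)$ with $[ig_L,A_L]=-\L_X(g_L)$. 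Finally, for (d) it is enough to show $\big[(H^2+1)^{-1},A_L\big]\in C^1(A_L)$. By (b) this operator equals $i(H^2+1)^{-1}\big(H^2g_L+2Hg_LH+g_LH^2\big)(H^2+1)^{-1}$, which I would rewrite, using that $(H^2+1)^{-1}$ commutes with $H$, as a finite sum of products of the operators $H^2(H^2+1)^{-1}=1-(H^2+1)^{-1}$, $\;H(H^2+1)^{-1}=\frac12\big[(H-i)^{-1}+(H+i)^{-1}\big]$, $\;g_L$ and $(H^2+1)^{-1}$. Each of these lies in $C^1(A_L)$ by (a), (b), (c), so the sum lies in $C^1(A_L)$ since $C^1(A_L)$ is an algebra, and (d) follows.

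The step I expect to be the main obstacle is the rigorous justification of the averaging identity in (a) — interchanging the commutator with $(H+z)^{-1}$ and the $t$-average defining $A_L$, and handling domains, given that $A_L$ is only defined as the closure of an operator prescribed by an integral formula on $C^1(M)$ and that $\e^{-itH}$ need not behave well with respect to $\dom(A_L)$ or $\dom(H_2)$. The device that circumvents this, already used in the proof of Lemma \ref{Lem_conj}(a), is to work throughout with the bounded approximants $H_2(\tau)$ and to pass to the limit only at the end, with Lemma \ref{lem_regu}(b) supplying the needed convergence. Once (a) is established, parts (b)--(d) are essentially routine algebra.
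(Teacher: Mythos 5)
Your proposal is correct and follows essentially the same route as the paper: part (a) by averaging the commutator of Lemma \ref{lem_regu}(b) over the conjugation by $\e^{itH}$ and identifying $g_L$ (your extra regularization via $H_2(\tau)$ mirrors the device already used in Lemma \ref{Lem_conj}(a)), part (b) by factorizing $(H^2+1)^{-1}=(H+i)^{-1}(H-i)^{-1}$ and the Leibniz rule, part (c) by the direct computation on $C^1(M)$, and part (d) by rewriting the commutator from (b) as an algebraic combination of $g_L$, $(H\pm i)^{-1}$ and $(H^2+1)^{-1}$ and invoking that $C^1(A_L)$ is stable under products. The only differences are cosmetic (e.g.\ your rewriting in (d) uses $H(H^2+1)^{-1}$ and $1-(H^2+1)^{-1}$ instead of the paper's real-part expression), so no further comment is needed.
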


\begin{proof}
(a) Let $\varphi\in C^1(M)$. Then, Lemma \ref{lem_regu}(b) implies that
\begin{align*}
&\big\langle\varphi,i(H+z)^{-1}A_L\varphi\big\rangle
-\big\langle A_L\varphi,i(H+z)^{-1}\varphi\big\rangle\\
&=-\frac1L\int_0^L\d t\,\big\langle\e^{-itH}\varphi,
(H+z)^{-1}(Hg+gH)(H+z)^{-1}\e^{-itH}\varphi\big\rangle\\
&=\textstyle-\big\langle\varphi,(H+z)^{-1}
\big\{H\big(\frac1L\int_0^L\d t\,\e^{itH}g\e^{-itH}\big)
+\big(\frac1L\int_0^L\d t\,\e^{itH}g\e^{-itH}\big)H\big\}
(H+z)^{-1}\e^{-itH}\varphi\big\rangle\hspace{1pt}.
\end{align*}
Since
$
\frac1L\int_0^L\d t\,\e^{itH}g\e^{-itH}
=\frac1L\int_0^L\d t\,\big(g\circ\widetilde F_{1,-t}\big)
=g_L
$,
it follows that
$$
\big\langle\varphi,i(H+z)^{-1}A_L\varphi\big\rangle
-\big\langle A_L\varphi,i(H+z)^{-1}\varphi\big\rangle\\
=-\big\langle\varphi,
(H+z)^{-1}\big(Hg_L+g_LH\big)(H+z)^{-1}\varphi\big\rangle\hspace{1pt},
$$
and one concludes using the density of $C^1(M)$ in $\dom(A_L)$.

(b) Let $\varphi\in\H$. Then, it follows from point (a) that
\begin{align*}
&\frac\d{\d t}\hspace{1pt}\e^{-itH_2}(H^2+1)^{-1}\e^{itH_2}\varphi\Big|_{t=0}\\
&=\frac\d{\d t}\hspace{1pt}\e^{-itH_2}(H+i)^{-1}\e^{itH_2}\e^{-itH_2}(H-i)^{-1}
\e^{itH_2}\varphi\Big|_{t=0}\\
&=-(H+i)^{-1}\big(Hg_L+g_LH\big)(H+i)^{-1}(H-i)^{-1}\varphi
-(H+i)^{-1}(H-i)^{-1}\big(Hg_L+g_LH\big)(H-i)^{-1}\varphi\\
&=-(H^2+1)^{-1}\big(H^2g_L+2Hg_LH+g_LH^2\big)(H^2+1)^{-1}\varphi,
\end{align*}
which implies the claim.

(c) Let $\varphi\in C^1(M)$, then we know from Lemma \ref{Lem_conj} that
$$
\textstyle
\big\langle\varphi,ig_LA_L\varphi\big\rangle
-\big\langle A_L\varphi,ig_L\varphi\big\rangle
=\big\langle\varphi,\big[g_L,\L_X+\frac12\div_\Omega X\big]\varphi\big\rangle
=\big\langle\varphi,-\L_X(g_L)\varphi\big\rangle\hspace{1pt}.
$$
Since $\L_X(g_L)\in\linf(M)$, it follows by the density of $C^1(M)$ in $\dom(A_L)$
that $g_L\in C^1(A_L)$ with $[\hspace{1pt}ig_L,A_L]=-\L_X(g_L)$.

(d) Direct computations using point (b) show that
\begin{align*}
&\big[i(H^2+1)^{-1},A_L\big]\\
&=-(H^2+1)^{-1}\big\{(H^2+1)g_L+2(H+i)g_L(H-i)\\
&\qquad\qquad\qquad\qquad+2i(H+i)g_L-2ig_L(H-i)+g_L(H^2+1)\big\}(H^2+1)^{-1}\\
&=-2\re\big\{g_L(H^2+1)^{-1}+2i(H-i)^{-1}g_L(H^2+1)^{-1}
+(H-i)^{-1}g_L(H+i)^{-1}\big\}.
\end{align*}
Morevover, we know from points (a)-(c) that the operators $g_L$, $(H^2+1)^{-1}$,
$(H+i)^{-1}$ and $(H-i)^{-1}$ belong to $C^1(A_L)$. So, one infers from standard
results on the space $C^1(A_L)$ (see \cite[Prop.~5.1.5]{ABG}) that
$\big[i(H^2+1)^{-1},A_L\big]$ also belongs to $C^1(A_L)$.
\end{proof}

In order to apply the theory of Section \ref{Sec_com}, one has to prove at some point
a positive commutator estimate. If the function $f$ were the constant function
$f\equiv1$, then one would have the equalities $H=H_1$,
$$
A_L
=\frac1L\int_0^L\d t\,\e^{itH_1}H_2\e^{-itH_1}
=H_2+\frac1L\int_0^L\d t\,tH_1
=H_2+\frac{LH_1}2\hspace{1pt},
$$
and
$$
\big[iH^2,A_L\big]
=\left[iH_1^2,H_2+\frac{LH_1}2\right]
=2H_1^2
$$
due to \eqref{com_H1H2}. Therefore, one would immediately obtain a strict Mourre
estimate for $H^2\equiv H_1^2$. This suggests to study the positivity of the
commutator $\big[iH^2,A_L\big]$ also in the case $f\not\equiv1$. A glimpse at Lemma
\ref{Lem_reg_A_L}(b) tells us that $\big[iH^2,A_L\big]$ is equal to the operator
$H^2g_L+2Hg_LH+g_LH^2$, which does not seem to exhibit any explicit positivity.
However, if the function $g_L$ were positive, then all the operators $g_L$, $H^2$ and
$Hg_LH$ would be positive, and thus the sum $H^2g_L+2Hg_LH+g_LH^2$ would be more
likely to be positive as a whole. In fact, this is exactly what happens and this was
the whole point of choosing the conjugate operator $A_L$ as we did. Thanks to the
unique ergodicity of the horocycle flow, one has $g_L>0$ if $L>0$ is big enough and
the operator $H^2$ satisfies a strict Mourre estimate with respect to
$A_L$\hspace{1pt}:

\begin{Lemma}[Strict Mourre estimate for $H^2$]\label{M_estimate}
Let $f\in C^3\big(M;(0,\infty)\big)$ and take $L>0$ big enough. Then, $g_L>0$
and one has for each bounded Borel set $J\subset(0,\infty)$ that
$$
E^{H^2}(J)\big[iH^2,A_L\big]E^{H^2}(J)\ge aE^{H^2}(J)\quad\hbox{with}\quad
a:=2\hspace{1pt}\inf(J)\cdot\inf_{p\in M}g_L(p)>0.
$$
\end{Lemma}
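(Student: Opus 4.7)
The plan has two nearly independent ingredients: the positivity $\inf_{p\in M}g_L(p)>0$ for $L$ large, and the lower bound on the commutator form $[iH^2,A_L]$ itself.

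For the positivity of $g_L$, I would use that the time-changed horocycle flow $\{\widetilde F_{1,t}\}_{t\in\R}$ is a smooth reparametrization of the uniquely ergodic horocycle flow $\{F_{1,t}\}_{t\in\R}$ with strictly positive continuous speed $1/f$; it is therefore itself uniquely ergodic, with unique invariant probability measure $d\widetilde\mu=f^{-1}\,d\mu_\Omega\big/\!\!\int_M f^{-1}\,d\mu_\Omega$. Applying unique ergodicity to the continuous function $g$, the averages $g_L$ converge uniformly on $M$ as $L\to\infty$ to the spatial mean $\bar g:=\int_M g\,d\widetilde\mu$. To evaluate $\bar g$ I would substitute $g=\tfrac12-\tfrac12\L_{X_2}(\ln f)$, use the pointwise chain-rule identity $f^{-1}\L_{X_2}(\ln f)=-\L_{X_2}(f^{-1})$, and invoke $\div_\Omega X_2=0$ to get $\int_M\L_{X_2}(f^{-1})\,d\mu_\Omega=0$; this yields $\bar g=\tfrac12>0$, hence $\inf_{p\in M}g_L(p)>0$ for all $L$ sufficiently large.

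For the Mourre estimate I would start from the form identity of Lemma \ref{Lem_reg_A_L}(b), $[iH^2,A_L]=H^2g_L+2Hg_LH+g_LH^2$, and rewrite it with the double-commutator identity $H^2g+gH^2=2HgH+[H,[H,g]]$ as $[iH^2,A_L]=4Hg_LH+[H,[H,g_L]]$. Because $H=f^{1/2}H_1f^{1/2}$ differs from $-i\L_{fX_1}$ only by a bounded multiplication, the operator $[H,[H,g_L]]$ reduces to multiplication by the real-valued bounded function $-\L_{fX_1}^2(g_L)$. A direct telescoping computation based on $\L_{fX_1}(g_L)=\tfrac1L(g-g\circ\widetilde F_{1,-L})$ and $\L_{fX_1}^2(g_L)=\tfrac1L(\L_{fX_1}g-(\L_{fX_1}g)\circ\widetilde F_{1,-L})$ gives $\|[H,[H,g_L]]\|_{\linf(M)}\le\tfrac{2}{L}\|\L_{fX_1}g\|_{\linf(M)}$, an $O(1/L)$ quantity.

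Putting the pieces together, for $\varphi\in E^{H^2}(J)\H$ with $J\subset(0,\infty)$ bounded Borel one has $\|H\varphi\|^2=\langle\varphi,H^2\varphi\rangle\ge\inf(J)\|\varphi\|^2$, which combined with $g_L\ge\inf_M g_L>0$ gives $\langle\varphi,4Hg_LH\varphi\rangle\ge4(\inf_M g_L)\inf(J)\|\varphi\|^2$, while $\langle\varphi,[H,[H,g_L]]\varphi\rangle\ge-\tfrac{C}{L}\|\varphi\|^2$ for a constant $C$ depending only on $f$. Choosing $L$ large enough that $C/L\le2(\inf_M g_L)\inf(J)$ absorbs the error into half the leading term and yields the announced Mourre estimate with $a=2(\inf_M g_L)\inf(J)$. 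The main obstacle is the first ingredient: identifying the invariant measure of the time-changed flow, then computing $\bar g$ in closed form via the integration by parts that exploits volume preservation of the geodesic flow; once this is settled, the Mourre estimate follows from a routine anticommutator manipulation combined with the uniform $O(1/L)$ control on $\L_{fX_1}^2(g_L)$.
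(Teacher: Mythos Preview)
Your first ingredient---the positivity of $g_L$ for large $L$---matches the paper's argument exactly: unique ergodicity of the time-changed flow gives $g_L\to\tfrac12$ uniformly on $M$, via $f^{-1}\L_{X_2}(\ln f)=-\L_{X_2}(f^{-1})$ and $\int_M\L_{X_2}(f^{-1})\,\d\mu_\Omega=0$. Your second ingredient, however, does not prove the lemma as stated. The lemma fixes \emph{one} $L$ (large enough that $g_L>0$) and then claims the estimate with constant $a=2\inf(J)\inf_M g_L$ for \emph{every} bounded Borel $J\subset(0,\infty)$. Your absorption step ``choose $L$ large enough that $C/L\le 2(\inf_M g_L)\inf(J)$'' ties $L$ to $\inf(J)$: as $\inf(J)\to0$ no single $L$ suffices. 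You therefore only obtain: for each $J$ there exists $L(J)$ such that the estimate holds---enough for the downstream Theorem~\ref{thm_spec} (Mourre theory being local in the spectrum), but not the lemma with its stated quantifiers and its exact constant.

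The paper avoids this $J$-dependence by a different decomposition. It writes $[iH^2,A_L]=2Hg_LH+(H^2g_L+g_LH^2)$ and argues that the anticommutator piece satisfies $E^{H^2}(J)(H^2g_L+g_LH^2)E^{H^2}(J)\ge0$ uniformly in $J$, once $g_L>0$: one factors $g_L=g_L^{1/2}g_L^{1/2}$, regularises $H^2$ by $H^2_\varepsilon:=H^2(\varepsilon^2H^2+1)^{-1}$, drops the manifestly positive term $2g_L^{1/2}H^2_\varepsilon g_L^{1/2}$, and reduces what remains (in the limit $\varepsilon\searrow0$) to $2[H,g_L^{1/2}]^2$. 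With the anticommutator disposed of as a whole, the full constant $a=2\inf(J)\inf_M g_L$ comes from $2Hg_LH$ alone, with nothing to absorb. Your identity $H^2g_L+g_LH^2=2Hg_LH+[H,[H,g_L]]$ and your $O(1/L)$ bound on $[H,[H,g_L]]$ are both correct, but treating the double commutator as a raw sup-norm error---rather than exploiting the operator structure of the anticommutator---is precisely what costs you the uniformity in $J$.
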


\begin{proof}
(i) The horocycle flow $\{F_{1,t}\}_{t\in\R}$ is uniquely ergodic with respect to the
measure $\mu_\Omega$ \cite{Fur73}. Therefore, we know from the theory of time changes
on compact metric spaces \cite[Prop.~3]{Hum74} that the flow
$\{\widetilde F_{1,t}\}_{t\in\R}$ is also uniquely ergodic with respect to the
measure
$$
\d\widetilde\mu_\Omega
:=\frac{f^{-1}\d\mu_\Omega}{\int_Mf^{-1}\hspace{1pt}\d\mu_\Omega}\hspace{1pt}.
$$
It follows that (see \cite[Prop.~1.3.4]{Qui07})
\begin{align*}
\lim_{L\to\infty}g_L
=\lim_{L\to\infty}\bigg(\frac12
-\frac1{2L}\int_0^L\d t\,\L_{X_2}\big(\ln(f)\big)\circ\widetilde F_{1,-t}\bigg)
&=\frac12-\frac12\int_M\d\widetilde\mu_\Omega\,\L_{X_2}\big(\ln(f)\big)\\
&=\frac12+\frac1{2\int_Mf^{-1}\hspace{1pt}\d\mu_\Omega}\int_M\d\mu_\Omega\,
\L_{X_2}\big(f^{-1}\big)\\
&=\frac12+\frac i{2\int_Mf^{-1}\hspace{1pt}\d\mu_\Omega}\hspace{1pt}	
\big\langle1,H_2f^{-1}\big\rangle\\
&=\frac12
\end{align*}
uniformly on $M$. Thus, $g_L>0$ if $L>0$ is big enough.

(ii) We know from Equation \eqref{2com} and Lemma \ref{Lem_reg_A_L}(b) that
$$
E^{H^2}(J)\big[iH^2,A_L\big]E^{H^2}(J)
=E^{H^2}(J)\big(H^2g_L+2Hg_LH+g_LH^2\big)E^{H^2}(J).
$$
But, point (i) implies that
$$
E^{H^2}(J)2Hg_LHE^{H^2}(J)\ge aE^{H^2}(J)
\quad\hbox{with}\quad a=2\inf(J)\cdot\inf_{p\in M}g_L(p)>0.
$$
Therefore, it is sufficient to show that
$E^{H^2}(J)\big(H^2g_L+g_LH^2\big)E^{H^2}(J)\ge0$.

So, for any $\varepsilon>0$ let
$H^2_\varepsilon:=H^2\big(\varepsilon^2H^2+1\big)^{-1}$ and
$H^\pm_\varepsilon:=H(\varepsilon H\pm i)^{-1}$. Then, the inclusion
$g_L^{1/2}\in C^1(H)$ (which can be proved as in Lemma \ref{Lem_reg_A_L}(c)) implies
that
$$
\slim_{\varepsilon\searrow0}\big[H^\pm_\varepsilon,g_L^{1/2}\big]
=\pm\slim_{\varepsilon\searrow0}
(\varepsilon H\pm i)^{-1}\big[iH,g_L^{1/2}\big](\varepsilon H\pm i)^{-1}\\
=\pm i\big[g_L^{1/2},H\big].
$$
Therefore, for each $\varphi\in\H$ it follows that
\begin{align*}
&\big\langle\varphi, E^{H^2}(J)\big(H^2g_L+g_LH^2\big)E^{H^2}(J)\varphi\big\rangle\\
&=\lim_{\varepsilon\searrow0}\big\langle\varphi,E^{H^2}(J)
\big(H^2_\varepsilon g_L^{1/2}g_L^{1/2}+g_L^{1/2}g_L^{1/2}H^2_\varepsilon\big)
E^{H^2}(J)\varphi\big\rangle\\
&=\lim_{\varepsilon\searrow0}\big\langle\varphi,
E^{H^2}(J)\big(\big[H^2_\varepsilon,g_L^{1/2}\big]g_L^{1/2}
+2\hspace{1pt}g_L^{1/2}H^2_\varepsilon g_L^{1/2}
+g_L^{1/2}\big[g_L^{1/2},H^2_\varepsilon\big]\big)E^{H^2}(J)\varphi\big\rangle\\
&\ge\lim_{\varepsilon\searrow0}\big\langle\varphi,
E^{H^2}(J)\big(\big[H^2_\varepsilon,g_L^{1/2}\big]g_L^{1/2}
+g_L^{1/2}\big[g_L^{1/2},H^2_\varepsilon\big]\big)E^{H^2}(J)\varphi\big\rangle\\
&=\lim_{\varepsilon\searrow0}\big\langle\varphi,
E^{H^2}(J)\big(H^+_\varepsilon\big[H^-_\varepsilon,g_L^{1/2}\big]g_L^{1/2}
+\big[H^+_\varepsilon,g_L^{1/2}\big]H^-_\varepsilon g_L^{1/2}\\
&\hspace{90pt}+g_L^{1/2}\big[g_L^{1/2},H^+_\varepsilon\big]H^-_\varepsilon
+g_L^{1/2}H^+_\varepsilon\big[g_L^{1/2},H^-_\varepsilon\big]\big)E^{H^2}(J)
\varphi\big\rangle\\
&=\lim_{\varepsilon\searrow0}\big\langle\varphi,
E^{H^2}(J)\big(H\big[H,g_L^{1/2}\big]g_L^{1/2}
+\big[H^+_\varepsilon,g_L^{1/2}\big]g_L^{1/2}H^-_\varepsilon
+\big[H^+_\varepsilon,g_L^{1/2}\big]\big[H^-_\varepsilon,g_L^{1/2}\big]\\
&\hspace{90pt}+g_L^{1/2}\big[g_L^{1/2},H\big]H
+H^+_\varepsilon g_L^{1/2}\big[g_L^{1/2},H^-_\varepsilon\big]
+\big[g_L^{1/2},H^+_\varepsilon\big]\big[g_L^{1/2},H^-_\varepsilon\big]\big)
E^{H^2}(J)\varphi\big\rangle\\
&=\big\langle\varphi,E^{H^2}(J)\big(H\big[H,g_L^{1/2}\big]g_L^{1/2}
+\big[H,g_L^{1/2}\big]g_L^{1/2}H
+2\big[H,g_L^{1/2}\big]^2+g_L^{1/2}\big[g_L^{1/2},H\big]H\\
&\hspace{260pt}+Hg_L^{1/2}\big[g_L^{1/2},H\big]\big)E^{H^2}(J)\varphi\big\rangle\\
&=\big\langle\varphi,E^{H^2}(J)2\big[H,g_L^{1/2}\big]^2E^{H^2}(J)
\varphi\big\rangle\\
&\ge0,
\end{align*}
which implies the claim.
\end{proof}

Using the previous results for $H^2$, one can finally determine the structure of the
spectrum of $H$ (and thus that of $\widetilde H$)\hspace{1pt}:

\begin{Theorem}[Spectral properties of $H$]\label{thm_spec}
Let $f\in C^3\big(M;(0,\infty)\big)$. Then, $H$ has purely absolutely continuous
spectrum, except at $\hspace{1pt}0$, where it has a simple eigenvalue with eigenspace
$\C\cdot f^{-1/2}$. In particular, the self-adjoint operator $\widetilde H$
associated to the vector field $fX_1$ has purely absolutely continuous spectrum,
except at $\hspace{1pt}0$, where it has a simple eigenvalue with eigenspace
$\C\cdot1$.
\end{Theorem}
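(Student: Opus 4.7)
The plan is to deduce the structure of the spectrum of $H$ from that of $H^2$, and to treat the zero eigenvalue separately using the horocycle flow's unique ergodicity.

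First, I would apply Theorem \ref{thm_abs} to $H^2$ with the conjugate operator $A_L$, choosing $L>0$ large enough for Lemma \ref{M_estimate} to apply. Lemma \ref{Lem_reg_A_L}(d) gives $(H^2+1)^{-1}\in C^2(A_L)\subset C^{1+0}(A_L)$, so $H^2$ is of class $C^{1+0}(A_L)$. Lemma \ref{M_estimate} provides a \emph{strict} Mourre estimate on every bounded Borel set $J\subset(0,\infty)$. Theorem \ref{thm_abs} then tells us that $H^2$ has no singular spectrum in $J$. Taking an exhaustion such as $J_n:=[1/n,n]$ and using that $(0,\infty)=\bigcup_n J_n$, I conclude that $H^2$ has purely absolutely continuous spectrum in $(0,\infty)$.

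Next, I would transfer this to $H$ via the spectral theorem. Since the Borel map $\lambda\mapsto\lambda^2$ pulls back $B\subset(0,\infty)$ to the disjoint union $\sqrt B\,\cup\,(-\sqrt B)$, one has
$$
E^H\big(\R\setminus\{0\}\big)=E^{H^2}\big((0,\infty)\big),
$$
and for every $\varphi\in\H$ the measure $B\mapsto\big\langle\varphi,E^{H^2}(B)\varphi\big\rangle$ on $(0,\infty)$ is absolutely continuous if and only if $B\mapsto\big\langle\varphi,E^H(B)\varphi\big\rangle$ is absolutely continuous on $\R\setminus\{0\}$ (the change of variables $\lambda=\mu^2$ preserves Lebesgue nullity on $(0,\infty)$). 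Hence $H$ has purely absolutely continuous spectrum on $\R\setminus\{0\}$.

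It remains to analyse the point $\{0\}$. Using Lemma \ref{Lemma_a}(b), the relation $H=f^{1/2}H_1f^{1/2}$ (extended by closure) shows that $\varphi\in\dom(H)$ satisfies $H\varphi=0$ iff $f^{1/2}\varphi\in\dom(H_1)$ with $H_1(f^{1/2}\varphi)=0$. Since the horocycle flow $\{F_{1,t}\}_{t\in\R}$ is uniquely ergodic on $(M,\mu_\Omega)$, the generator $H_1$ has $0$ as a simple eigenvalue with eigenspace $\C\cdot 1$. Therefore $f^{1/2}\varphi=c$ for some $c\in\C$, i.e.\ $\varphi=c\hspace{1pt}f^{-1/2}$. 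Conversely, $f^{-1/2}\in C^1(M)\subset\dom(H)$ and $Hf^{-1/2}=f^{1/2}H_1(1)=0$, so $0$ is indeed a simple eigenvalue of $H$ with eigenspace $\C\cdot f^{-1/2}$. The corresponding statement for $\widetilde H$ follows from the unitary equivalence $\widetilde H=\U H\U^*$ of Lemma \ref{Lemma_a}(b), which sends the eigenspace $\C\cdot f^{-1/2}$ of $H$ to $\U(\C\cdot f^{-1/2})=\C\cdot 1$.

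The only step that is not essentially bookkeeping is the first one, where one has to combine all the regularity and positivity input of Lemmas \ref{Lem_reg_A_L} and \ref{M_estimate} into a single application of Theorem \ref{thm_abs}; the main delicate point is that the strict Mourre estimate is available only on compact subintervals of $(0,\infty)$, so one must argue by exhaustion to cover the whole positive half-line before using the squaring map to pass back to $H$.
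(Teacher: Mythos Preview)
Your proof is correct and follows essentially the same route as the paper: apply Theorem~\ref{thm_abs} to $H^2$ using the regularity from Lemma~\ref{Lem_reg_A_L}(d) and the strict Mourre estimate from Lemma~\ref{M_estimate}, transfer absolute continuity from $H^2$ to $H$ via the squaring map (the paper phrases this as the Luzin $N$ property of $\lambda\mapsto\lambda^2$ and cites \cite{BW83}), and then identify the kernel. The only noteworthy variation is in this last step: the paper computes $\ker(H)=\U^*\ker(\widetilde H)=\C\cdot f^{-1/2}$ using the ergodicity of the \emph{time-changed} flow $\{\widetilde F_{1,t}\}$, whereas you pass through $\ker(H_1)$ and invoke the ergodicity of the original horocycle flow --- a slightly more direct route, though your claim that $H\varphi=0\iff H_1(f^{1/2}\varphi)=0$ implicitly uses that $\dom(\widetilde H)=\dom(H_1)$ (equivalent graph norms, since $f$ is bounded above and below and $\widetilde H=fH_1$ on $C^1(M)$).
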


\begin{proof}
We know from Lemmas \ref{Lem_reg_A_L}(d) and \ref{M_estimate} that
$(H^2+1)^{-1}\in C^2(A_L)$ and that $H^2$ satisfies a strict Mourre estimate on each
bounded Borel subset of $(0,\infty)$. It follows by Theorem \ref{thm_abs} that $H^2$
has purely absolutely continuous spectrum, except at $0$, where it may have an
eigenvalue. Accordingly, the Hilbert space $\H$ admits the orthogonal decomposition
$$
\H=\ker(H^2)\oplus\H_{\rm ac}(H^2),
$$
with $\H_{\rm ac}(H^2)$ the subspace of absolute continuity of $H^2$.

Now, the function $\lambda\mapsto\lambda^2$ has the Luzin N property on
$\R$\hspace{1pt}; namely, if $J$ is a Borel subset of $\R$ with Lebesgue measure
zero, then $J^2$ also has Lebesgue measure zero. It follows that
$\H_{\rm ac}(H^2)\subset\H_{\rm ac}(H)$, with $\H_{\rm ac}(H)$ the subspace of
absolute continuity of $H$ (see Proposition 29, Section 3.5.4 of \cite{BW83}).
Furthermore, we have that
$$
\ker(H^2)
=\ker(H)
=\U^*\ker(\widetilde H)
=\C\cdot f^{-1/2}
$$
due to the equality $H=\U^*\widetilde H\U$ and the ergodicity of the flow
$\{\widetilde F_{1,t}\}_{t\in\R}$. We thus infer that
$$
\H=\ker(H^2)\oplus\H_{\rm ac}(H^2)\subset\ker(H)\oplus\H_{\rm ac}(H).
$$
So, one necessarily has $\H=\ker(H)\oplus\H_{\rm ac}(H)$, meaning that $H$ has purely
absolutely continuous spectrum, except at $\hspace{1pt}0$, where it has a simple
eigenvalue with eigenspace $\ker(H)\equiv\C\cdot f^{-1/2}$. Since
$H=\U^*\widetilde H\U$, this implies that $\widetilde H$ has purely absolutely
continuous spectrum, except at $\hspace{1pt}0$, where it has a simple eigenvalue with
eigenspace $\C\cdot1$.
\end{proof}

Theorem \ref{thm_spec} establishes the absolute continuity of time changes of
horocycle flows on compact surfaces of constant negative curvature for time changes
of class $C^3$. This improves Theorem 6 of \cite{FU12}, where G. Forni and
C. Ulcigrai show the same result for time changes in a Sobolev space of order
$>11/2$ (under the same assumption, the authors of \cite{FU12} also show that the
maximal spectral type is equivalent to Lebesgue). This also complements Theorem 4.2
of \cite{Tie12}, where the absolute continuity is shown for surfaces of finite volume
and time changes of class $C^2$ under the additional condition of A. G. Kushnirenko.

We note that it would be interesting to see if the technics of this section could be
adapted to the case of horocycle flows on surfaces of finite volume or surfaces of
non-constant negative curvature. In the first case, one would have to deal with the
fact that the horocycle flow is not uniquely ergodic (see \cite{Dan81,DS84}), while
in the second case one would have to deal with the fact that the horocycle flow is
uniquely ergodic, but with the Margulis parametrisation and with respect to the
Bowen-Margulis measure (see \cite{Cou09,Mar75}).

\section{Skew products over translations}\label{sec_skew}
\setcounter{equation}{0}

Let $X$ be a compact metric abelian Banach Lie group with normalised Haar measure
$\mu$ (such a group is isomorphic to a subgroup of the torus $\T^{\aleph_0}$, see
\cite[Thm.~8.45]{HM06}). Take $\{y_t\}_{t\in\R}$ a $C^1$ one-parameter subgroup of
$X$ and let $\{F_t\}_{t\in\R}$ be the corresponding translation flow, \ie,
$$
F_t(x):=y_t\hspace{1pt}x,\quad t\in\R,~x\in X.
$$
Assume that the translation $F_1$ is ergodic (so that both flows
$\{F_\ell\}_{\ell\in\Z}$ and $\{F_t\}_{t\in\R}$ are uniquely ergodic, see
\cite[Thm.~4.1.1]{CFS82} and \cite[Sec.~1.2.2]{Kre85}) and associate to
$\{F_t\}_{t\in\R}$ the operators
$$
V_t\hspace{1pt}\varphi:=\varphi\circ F_t\hspace{1pt},\quad t\in\R,~\varphi\in C(X).
$$
Due to the continuity of the map $\R\ni t\mapsto y_t\in X$ and the smoothness of the
group operation, the family $\{V_t\}_{t\in\R}$ extends to a strongly continuous
unitary group in $\H:=\ltwo(X,\mu)$ satisfying
$V_t\hspace{1pt}C^\infty(X)\subset C^\infty(X)$ for each $t\in\R$. It follows from
Nelson's theorem \cite[Prop.~5.3]{Amr09} that the generator of the group
$\{V_t\}_{t\in\R}$
$$
H\varphi:=\slim_{t\to0}it^{-1}\big(V_t-1\big)\varphi,
\quad\varphi\in\dom(H):=\left\{\varphi\in\H\mid
\lim_{t\to0}|t|^{-1}\big\|\big(V_t-1\big)\varphi\big\|<\infty\right\},
$$
is essentially self-adjoint on $C^\infty(X)$, and one has
$$
H\varphi:=-i\hspace{1pt}\L_Y\varphi,\quad\varphi\in C^\infty(M),
$$
with $Y$ the $C^0$ divergence-free vector field associated to $\{F_t\}_{t\in\R}$ and
$\L_Y$ the corresponding Lie derivative. Furthermore, the operator $V_1$ has pure
point spectrum
$\sigma(V_1)=\overline{\big\{\gamma(y_1)\mid\gamma\in\widehat X\big\}}$, with
$\widehat X$ the character group of $X$ (see \cite[Thm.~3.5]{Wal82}).

Now, let $G$ be a compact metric abelian group with Haar measure $\nu$ and character
group $\widehat G$, and let $\phi:X\to G$ be a measurable function (a cocycle). Then,
one can define the skew product $T:X\times G\to X\times G$ given by 
$T(x,z):=\big(y_1\hspace{1pt}x,\phi(x)\hspace{1pt}z\big)$ and the corresponding
unitary operator
\begin{equation}\label{def_W}
W\hspace{1pt}\psi:=\psi\circ T\hspace{1pt},\quad\psi\in\ltwo(X\times G,\mu\times\nu).
\end{equation}
It is known \cite[Sec.~3.1]{Goo99} that the operator $W$ is reduced by the
orthogonal decomposition
$$
\ltwo(X\times G,\mu\times\nu)
=\bigoplus_{\chi\in\widehat G}\hspace{1pt}L_\chi\hspace{1pt},
\quad L_\chi:=\big\{\varphi\otimes\chi\mid\varphi\in\H\big\},
$$
and that the restriction $W|_{L_\chi}$ is unitarily equivalent to the unitary
operator
$$
U_\chi\hspace{1pt}\varphi:=(\chi\circ\phi)\hspace{1pt}V_1\hspace{1pt}\varphi,
\quad\varphi\in\H.
$$
Furthermore, the operator $U_\chi$ satisfies the following purity
law: the spectrum of $U_\chi$ has
uniform multiplicity and is either purely punctual, purely singularly continuous or
purely Lebesgue (this follows from Helson's analysis \cite{Hel86}; see
\cite[Thm.~2]{Gro91}, \cite[Thm.~4]{GL99} and \cite[p.~8560]{Lem09}).

In the sequel, we treat the case where the cocycle $\phi$ satisfies the following
assumption\hspace{1pt}:

\begin{Assumption}[Cocycle]\label{ass_phi}
The map $\phi:X\to G$ satisfies $\phi=\xi\hspace{1pt}\eta$, where
\begin{enumerate}
\item[(i)] $\xi:X\to G$ is a continuous group homomorphism,
\item[(ii)] $\eta\in C(X;G)$ has a Lie derivative $\L_Y(\chi\circ\eta)$ which
satisfies a uniform Dini condition along the flow $\{F_t\}_{t\in\R}$, \ie,
$$
\int_0^1\frac{\d t}t\,\big\|\L_Y(\chi\circ\eta)\circ F_t
-\L_Y(\chi\circ\eta)\big\|_{\linf(X)}
=\int_0^1\frac{\d t}t\,\big\|V_t\hspace{1pt}\L_Y(\chi\circ\eta)V_{-t}
-\L_Y(\chi\circ\eta)\big\|_{\B(\H)}<\infty.
$$
\end{enumerate}
\end{Assumption}

We start the analysis with a first lemma on the regularity of the operators $U_\chi$.
We use the fact that the map $\R\ni t\mapsto(\chi\circ\xi)(y_t)\in\mathbb S^1$ is a
character on $\R$, and thus of class $C^\infty$. We also use the notations
$$
\xi_0:=\frac\d{\d t}\hspace{1pt}(\chi\circ\xi)(y_t)\Big|_{t=0}\hspace{1pt},\qquad
g:=|\xi_0|^2-\xi_0\hspace{1pt}\frac{\L_Y(\chi\circ\eta)}{\chi\circ\eta}
\qquad\hbox{and}\qquad
A:=-i\hspace{1pt}\xi_0 H,
$$
and observe that $\xi_0\in i\hspace{1pt}\R$, that $g:X\to\R$ satisfies a uniform Dini
condition along $\{F_t\}_{t\in\R}$, that $A$ is self-adjoint with
$\dom(A)\supset\dom(H)$ and that $\xi_0$, $g$ and $A$ depend on $\chi$ (even if we do
not specify it in the notation).

\begin{Lemma}\label{UtetA}
Let $\phi$ satisfy Assumption \ref{ass_phi}. Then, $U_\chi\in C^{1+0}(A)$ with
$\big[A,U_\chi\big]=g\hspace{1pt}U_\chi$\hspace{1pt}.
\end{Lemma}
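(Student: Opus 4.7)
The crucial structural fact is that $\xi_0\in i\R$ makes $A=-i\xi_0 H$ a real scalar multiple of $H$, so $\e^{-itA}=V_{ct}$ for some $c\in\R$; in particular the group $\{\e^{-itA}\}_{t\in\R}$ sits inside the translation flow and $A$ commutes with $V_1$. I would exploit this throughout. The first step is a Lie-derivative computation: since $\chi\circ\xi$ is a character of $X$, the homomorphism property $(\chi\circ\xi)(y_tx)=(\chi\circ\xi)(y_t)(\chi\circ\xi)(x)$ forces $\L_Y(\chi\circ\xi)=\xi_0(\chi\circ\xi)$, and Leibniz applied to $\phi=\xi\eta$ then yields $\L_Y(\chi\circ\phi)=(\chi\circ\phi)\bigl(\xi_0+\L_Y(\chi\circ\eta)/(\chi\circ\eta)\bigr)$.

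Combining this with $[A,V_1]=0$ and the multiplication-operator identity $[H,\chi\circ\phi]=-i\L_Y(\chi\circ\phi)$ gives, on the smooth core $C^\infty(X)$,
$$
[A,U_\chi]=-i\xi_0[H,\chi\circ\phi]V_1=-\xi_0\L_Y(\chi\circ\phi)V_1=\left(|\xi_0|^2-\xi_0\frac{\L_Y(\chi\circ\eta)}{\chi\circ\eta}\right)U_\chi=gU_\chi,
$$
using $-\xi_0^2=|\xi_0|^2$. Since $g\in\linf(X)$ and $C^\infty(X)$ is a core for $A$, this form identity extends by density to a bounded form on $\dom(A)$, so $U_\chi\in C^1(A)$ with $[A,U_\chi]=gU_\chi$.

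For the $C^{1+0}(A)$ condition, the identity $\e^{-itA}=V_{ct}$ converts conjugation by $\e^{-itA}$ into composition with the flow: $V_{ct}fV_{-ct}=f\circ F_{ct}$ for any multiplier $f$, and $V_{ct}U_\chi V_{-ct}=((\chi\circ\phi)\circ F_{ct})V_1$. A telescoping expansion yields
$$
\left\|\e^{-itA}[A,U_\chi]\e^{itA}-[A,U_\chi]\right\|\le\|g\circ F_{ct}-g\|_{\linf(X)}+\|g\|_{\linf(X)}\hspace{1pt}\|(\chi\circ\phi)\circ F_{ct}-\chi\circ\phi\|_{\linf(X)}.
$$
The character identity $(\chi\circ\xi)\circ F_t-\chi\circ\xi=(\e^{\xi_0 t}-1)(\chi\circ\xi)$ is $O(|t|)$, and boundedness of $\L_Y(\chi\circ\eta)$ makes $\chi\circ\eta$ Lipschitz along the flow; hence $\chi\circ\phi$ is Lipschitz along the flow and the second term is Dini-integrable. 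For the first term I would decompose $g=|\xi_0|^2-\xi_0\hspace{1pt}\overline{\chi\circ\eta}\hspace{1pt}\L_Y(\chi\circ\eta)$ as a constant plus the product of the bounded Lipschitz factor $\overline{\chi\circ\eta}$ with the Dini factor $\L_Y(\chi\circ\eta)$ (Assumption \ref{ass_phi}(ii)), and invoke a Leibniz-type estimate to propagate Dini through the product. The main obstacle is exactly this last bookkeeping: the assumption bounds $\L_Y(\chi\circ\eta)$ alone, so one must check that Dini integrability survives multiplication by the Lipschitz factor $\overline{\chi\circ\eta}$ and by the additional $(\chi\circ\phi)\circ F_{ct}$ that appears in the telescoping. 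Everything else is forced by the fortunate choice $A=-i\xi_0 H$.
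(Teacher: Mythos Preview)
Your proposal is correct and follows essentially the same route as the paper: the $C^1(A)$ part is identical (Leibniz on $\chi\circ\phi=(\chi\circ\xi)(\chi\circ\eta)$, commutation of $A$ with $V_1$, density of $C^\infty(X)$), and for the $C^{1+0}$ part both arguments reduce to the Dini condition on $g$ after splitting off the $U_\chi$-factor. The only cosmetic difference is that the paper handles that factor abstractly---``since $U_\chi\in C^1(A)$, the map $t\mapsto\e^{-itA}U_\chi\e^{itA}$ is norm-Lipschitz''---whereas you compute $\e^{-itA}U_\chi\e^{itA}=((\chi\circ\phi)\circ F_{ct})V_1$ explicitly and use the Lipschitz continuity of $\chi\circ\phi$ along the flow; your residual worry about the extra $(\chi\circ\phi)\circ F_{ct}$ in the telescoping is harmless, since $|\chi\circ\phi|\equiv1$.
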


\begin{proof}
Since $A$ and $V_1$ commute, one has for each $\varphi\in C^\infty(X)$ that
\begin{align*}
\big\langle A\hspace{1pt}\varphi,U_\chi\hspace{1pt}\varphi\big\rangle
-\big\langle\varphi,U_\chi\hspace{1pt}A\hspace{1pt}\varphi\big\rangle
=\big\langle\varphi,\big[A,\chi\circ\phi\big]V_1\hspace{1pt}\varphi\big\rangle
=\big\langle\varphi,-\xi_0\hspace{1pt}\L_Y(\chi\circ\phi)\hspace{1pt}V_1\hspace{1pt}
\varphi\big\rangle\hspace{1pt}.
\end{align*}
Furthermore, the homomorphism property of $\chi$ and $\xi$ and the Leibniz rule for
Lie derivatives imply that
$$
\L_Y(\chi\circ\phi)
=\L_Y(\chi\circ\xi)\hspace{1pt}(\chi\circ\eta)
+(\chi\circ\xi)\hspace{1pt}\L_Y(\chi\circ\eta)
=\left(\xi_0+\frac{\L_Y(\chi\circ\eta)}{\chi\circ\eta}\right)(\chi\circ\phi).
$$
It follows that
$$
\big\langle A\hspace{1pt}\varphi,U_\chi\hspace{1pt}\varphi\big\rangle
-\big\langle\varphi,U_\chi\hspace{1pt}A\hspace{1pt}\varphi\big\rangle
=\big\langle\varphi,g\hspace{1pt}U_\chi\hspace{1pt}\varphi\big\rangle,
$$
with $g\in\linf(X)$. So, one has $U_\chi\in C^1(A)$ with
$[A,U_\chi]=g\hspace{1pt}U_\chi$ due to the density of $C^\infty(X)$ in $\dom(A)$.

To show that $U_\chi\in C^{1+0}(A)$, one has to check that
$
\int_0^1\frac{\d t}t\hspace{1pt}
\big\|\e^{-itA}[A,U_\chi]\e^{itA}-[A,U_\chi]\big\|_{\B(\H)}<\infty
$.
But since $[A,U_\chi]=g\hspace{1pt}U_\chi$ with $U_\chi\in C^1(A)$, one is reduced
to showing that
$$
\int_0^1\frac{\d t}t\hspace{1pt}\big\|\e^{-itA}g\e^{itA}-g\big\|_{\B(\H)}<\infty
~\iff~\int_0^{-i\hspace{1pt}\xi_0}\frac{\d s}s\,
\big\|V_s\hspace{2pt}gV_{-s}-g\big\|_{\B(\H)}<\infty,
$$
which is is readily verified due to the uniform Dini condition satisfied by $g$.
\end{proof}

Since $U_\chi\in C^1(A)$, we know from Section \ref{Sec_com} that the operator
$$
A_n\hspace{1pt}\varphi
:=\frac1n\sum_{\ell=0}^{n-1}
U_\chi^{-\ell}\hspace{1pt}A\hspace{1pt}U_\chi^\ell\hspace{1pt}\varphi
=\frac1n\sum_{\ell=0}^{n-1}U_\chi^{-\ell}\big[A,U_\chi^\ell\big]\varphi
+A\hspace{1pt}\varphi,\quad n\in\N^*,~\varphi\in\dom(A_n):=\dom(A),
$$
is self-adjoint. In the next lemma, we prove regularity properties of the operator
$U_\chi$ with respect to $A_n$ and the strict Mourre estimate for $U_\chi$.
The averages of the function $g$ along the flow $\{F_\ell\}_{\ell\in\Z}$, \ie,
$$
g_n:=\frac1n\sum_{\ell=0}^{n-1}g\circ F_{-\ell}\hspace{1pt},\quad n\in\N^*,
$$
appear in a natural way.

\begin{Lemma}[Strict Mourre estimate for $U_\chi$]\label{Mourre_U_chi}
Let $\phi$ satisfy Assumption \ref{ass_phi} with $\chi\circ\xi\not\equiv1$, and
suppose $F_1$ is ergodic. Then,
\begin{enumerate}
\item[(a)] one has $U_\chi\in C^{1+0}(A_n)$ with $[A_n,U_\chi]=g_nU_\chi$,
\item[(b)] if $n$ is big enough, one has $g_n>0$ and
$(U_\chi)^*\big[A_n,U_\chi\big]\ge a$\hspace{1pt} with $a:=\inf_{x\in X}g_n(x)>0$.
\end{enumerate}
\end{Lemma}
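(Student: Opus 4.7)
The plan is to exploit the commutator identity $[A,U_\chi]=g\hspace{1pt}U_\chi$ from Lemma \ref{UtetA} together with the averaging scheme of Section \ref{Sec_com}, and verify the two claims by separate arguments. For (a), the first step is to plug $[A,U_\chi]=g\hspace{1pt}U_\chi$ into $A_n=\frac1n\sum_{\ell=0}^{n-1}U_\chi^{-\ell}A\hspace{1pt}U_\chi^\ell$. Writing $U_\chi^\ell=(\chi\circ\phi^{(\ell)})V_\ell$ with $\phi^{(\ell)}:=\phi\cdot(\phi\circ F_1)\cdots(\phi\circ F_{\ell-1})$ the $\ell$-th cocycle, the character factor $\chi\circ\phi^{(\ell)}$ has modulus $1$ and therefore cancels when it sandwiches the real multiplication operator $g$: $U_\chi^{-\ell}g\hspace{1pt}U_\chi^\ell=V_{-\ell}\hspace{1pt}g\hspace{1pt}V_\ell=g\circ F_{-\ell}$. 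Summing gives $[A_n,U_\chi]=g_n U_\chi$, which in particular puts $U_\chi$ in $C^1(A_n)$.

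To upgrade this to $C^{1+0}(A_n)$, the same factorisation trick as at the end of the proof of Lemma \ref{UtetA} reduces matters to showing that $\int_0^1\frac{\d t}t\hspace{1pt}\|\e^{-itA_n}g_n\e^{itA_n}-g_n\|_{\B(\H)}<\infty$. The decisive observation is that the correction $A_n-A$ is itself a real bounded multiplication operator: expanding $U_\chi^{-\ell}[A,U_\chi^\ell]$ as a telescoping sum of translates of $g$ gives $A_n-A=\frac1n\sum_{\ell=0}^{n-1}\sum_{k=0}^{\ell-1}g\circ F_{k-\ell}$, which commutes with every multiplication operator $M_h$. Hence $[A_n,M_h]=[A,M_h]$, and since $A=-i\xi_0H$ is proportional to the generator of the Koopman group with real coefficient $\alpha:=-i\xi_0\in\R$, iterating the Heisenberg equation on the abelian algebra of multiplications yields the explicit formula $\e^{-itA_n}M_h\e^{itA_n}=M_{h\circ F_{-\alpha t}}$ for every $h\in C(X)$. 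Setting $h=g_n$, the Dini integral becomes $\int_0^1\frac{\d t}t\hspace{1pt}\|g_n\circ F_{-\alpha t}-g_n\|_{\linf(X)}$; since each $F_{-\ell}$ is an $\linf$-isometry and $g_n$ is an average of translates of $g$, this is bounded by $\int_0^1\frac{\d t}t\hspace{1pt}\|g\circ F_{-\alpha t}-g\|_{\linf(X)}$, which is finite by the uniform Dini condition for $g$ along $\{F_t\}_{t\in\R}$ recorded just after the definition of $A$.

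For (b), I would use the unique ergodicity of $F_1$ (which, together with the translation invariance of $\mu$, holds since $F_1$ is ergodic) to conclude that the Birkhoff averages $g_n$ converge uniformly on $X$ to the constant $\int_X g\,\d\mu$. Writing $\xi_0=i\beta$ with $\beta\in\R\setminus\{0\}$ (nonzero by the hypothesis $\chi\circ\xi\not\equiv1$), the next step is to compute $\int_X g\,\d\mu=|\xi_0|^2-\xi_0\int_X\L_Y(\chi\circ\eta)\cdot\overline{\chi\circ\eta}\,\d\mu$ and to show that the second integral vanishes. Pointwise, $\L_Y(|\chi\circ\eta|^2)=0$ forces $\L_Y(\chi\circ\eta)\cdot\overline{\chi\circ\eta}$ to be purely imaginary; combined with the divergence-free property of $Y$ (so $\int_X\L_Y(\,\cdot\,)\,\d\mu=0$) applied after a local lifting of the $\mathbb S^1$-valued map $\chi\circ\eta$ to a genuine phase, this gives $\int_X\L_Y(\chi\circ\eta)\cdot\overline{\chi\circ\eta}\,\d\mu=0$. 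Therefore $\int_X g\,\d\mu=|\xi_0|^2>0$, and uniform convergence yields $g_n>0$ and $a:=\inf_{x\in X}g_n(x)>0$ for $n$ big enough. The Mourre estimate now follows immediately from part (a): $(U_\chi)^*[A_n,U_\chi]=(U_\chi)^*g_n\hspace{1pt}U_\chi=g_n\circ F_1$, and $\inf(g_n\circ F_1)=\inf g_n=a$ because $F_1$ is a homeomorphism of $X$.

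The main obstacle I foresee is the vanishing of $\int_X\L_Y(\chi\circ\eta)\cdot\overline{\chi\circ\eta}\,\d\mu$: the integrand being pointwise purely imaginary is not by itself enough to force the integral to be zero for an arbitrary unit-modulus continuous function with nontrivial winding along the flow. The proof must use the structural decomposition $\phi=\xi\eta$ in an essential way, exploiting that absorbing the character/winding part of $\phi$ into the homomorphism $\xi$ is exactly what allows $\chi\circ\eta$ to have zero averaged logarithmic derivative along $\{F_t\}_{t\in\R}$; this is the one step that requires more than a routine calculation and where the hypothesis $\chi\circ\xi\not\equiv1$ is indispensable.
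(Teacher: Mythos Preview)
Your argument for (a) is correct but takes a more hands-on route than the paper. The paper simply invokes the abstract transfer lemma \cite[Lemma~4.1]{FRT12}, which yields $U_\chi\in C^{1+0}(A_n)$ directly from $U_\chi\in C^{1+0}(A)$ together with the averaged commutator formula, and then computes $[A_n,U_\chi]=g_nU_\chi$ exactly as you do. Your approach avoids the black box: you observe that $A_n-A$ is a bounded real multiplication operator, so $\e^{itA}\e^{-itA_n}$ solves a linear ODE with mutually commuting multiplication-operator coefficients and is therefore itself multiplication by a unimodular function; this gives $\e^{-itA_n}M_h\e^{itA_n}=\e^{-itA}M_h\e^{itA}=M_{h\circ F_{\alpha t}}$, and the Dini bound follows. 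One caveat: the passage from ``$[A_n,M_h]=[A,M_h]$'' to the full conjugation formula needs this short interaction-picture argument rather than a formal iteration of the Heisenberg equation, since $g$ (hence $B:=A_n-A$) is only Dini along the flow and $[A,B]$ need not make sense as a bounded operator.

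For (b) your strategy coincides with the paper's, and the obstacle you flag is resolved there in one line: one writes $\chi\circ\eta=\e^{if_{\chi,\eta}}$ for a real $f_{\chi,\eta}\in\dom(H)$, whence $\int_X\frac{\L_Y(\chi\circ\eta)}{\chi\circ\eta}\,\d\mu=-\langle1,Hf_{\chi,\eta}\rangle=0$ because $H1=0$. Your instinct that this is where the decomposition $\phi=\xi\eta$ matters is right---the global lift is precisely the statement that the winding of $\phi$ has been absorbed into $\xi$---but there is a misattribution in your last paragraph: the hypothesis $\chi\circ\xi\not\equiv1$ is \emph{not} what makes that integral vanish. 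It is used separately, together with the minimality of the continuous flow $\{F_t\}$, to guarantee $\xi_0\ne0$ (if $\xi_0=0$ then $(\chi\circ\xi)(y_t)\equiv1$, hence $\chi\circ\xi\equiv1$ on the dense orbit closure $X$). A minor slip: $(U_\chi)^*g_nU_\chi=g_n\circ F_{-1}$, not $g_n\circ F_1$, though this does not affect the infimum.
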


\begin{proof}
(a) We know from Lemma \ref{UtetA} that $U_\chi\in C^{1+0}(A)$. So, it follows from
the abstract result \cite[Lemma~4.1]{FRT12} that $U_\chi\in C^{1+0}(A_n)$ with
$
[A_n,U_\chi]
=\frac1n\sum_{\ell=0}^{n-1}U_\chi^{-\ell}\hspace{1pt}[A,U_\chi]\hspace{1pt}
U_\chi^\ell
$.
Using the equality $[A,U_\chi]=g\hspace{1pt}U_\chi$, one thus obtains that
\begin{align*}
\big[A_n,U_\chi\big]
=\left(\frac1n\sum_{\ell=0}^{n-1}U_\chi^{-\ell}\hspace{1pt}g\hspace{1pt}
U_\chi^\ell\right)U_\chi
=\left(\frac1n\sum_{\ell=0}^{n-1}g\circ F_{-\ell}\right)U_\chi
=g_n\hspace{1pt}U_\chi,
\end{align*}
which concludes the proof of the claim.

(b) Due to the unique ergodicity of the discrete flow $\{F_\ell\}_{\ell\in\Z}$, we
know that $\lim_{n\to\infty}g_n=\int_X\d\mu\,g$ uniformly on $X$. Using the fact that
$\chi\circ\eta=\e^{if_{\chi,\eta}}$ for some real function $f_{\chi,\eta}\in\dom(H)$,
we thus deduce that
$$
\lim_{n\to\infty}g_n
=\int_X\d\mu\,g
=|\xi_0|^2-\xi_0\int_X\d\mu\,\frac{\L_Y(\chi\circ\eta)}{\chi\circ\eta}
=|\xi_0|^2+\xi_0\hspace{1pt}\big\langle1,Hf_{\chi,\eta}\big\rangle
=|\xi_0|^2
$$
uniformly on $X$. But, since the character $\chi\circ\xi:X\to\mathbb S^1$ is
nontrivial, we know that $\xi_0\neq0$ due to the unique ergodicity of the continuous
flow $\{F_t\}_{t\in\R}$ (see \cite[Thm.~4.1.1']{CFS82}). Therefore, $g_n>0$ if $n>0$
is big enough, and point (a) implies that
$$
(U_\chi)^*\big[A_n,U_\chi\big]=(U_\chi)^*g_n\hspace{1pt}U_\chi\ge a,
$$
with $a=\inf_{x\in X}g_n(x)>0$.
\end{proof}

Using what precedes, we can determine the spectral properties of the operators
$U_\chi$ and $W$ (see \eqref{def_W} for the definition of $W$)\hspace{1pt}:

\begin{Theorem}[Spectral properties of $U_\chi$ and $W$]\label{spec_co}
Let $\phi$ satisfy Assumption \ref{ass_phi} with $\chi\circ\xi\not\equiv1$, and
suppose that $F_1$ is ergodic. Then, the operator $U_\chi$ has purely Lebesgue
spectrum. In particular, the restriction of $W$ to the subspace
$
\bigoplus_{\chi\in\widehat G,\,\chi\circ\xi\not\equiv1}L_\chi
\subset\ltwo(X\times G,\mu\times\nu)
$
has countable Lebesgue spectrum.
\end{Theorem}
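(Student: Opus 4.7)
The plan is to combine Lemma \ref{Mourre_U_chi} with the abstract unitary Mourre Theorem \ref{thm_unitary} to get purely absolutely continuous spectrum for $U_\chi$, then upgrade this to purely Lebesgue spectrum via Helson's purity law, and finally assemble the direct sum decomposition to obtain countable Lebesgue spectrum for $W$ on the relevant invariant subspace.

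The first step is to observe that Lemma \ref{Mourre_U_chi}(a)-(b) furnishes exactly the hypotheses of Theorem \ref{thm_unitary} with $\Theta:=\mathbb S^1$. Indeed, once $n$ is chosen large enough that $g_n>0$, one has $U_\chi\in C^{1+0}(A_n)$ and the operator inequality $(U_\chi)^*[A_n,U_\chi]\ge a$ on all of $\H$, which is a strict Mourre estimate with $K=0$ on the open set $\Theta=\mathbb S^1$. Since $E^{U_\chi}(\mathbb S^1)=1$, multiplying on both sides preserves the inequality trivially. Theorem \ref{thm_unitary} then yields that $U_\chi$ has no singular spectrum anywhere on $\mathbb S^1$, i.e.\ $U_\chi$ has purely absolutely continuous spectrum.

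Next, I invoke the Helson purity law recalled just after \eqref{def_W}: the spectrum of $U_\chi$ has uniform multiplicity and is either purely punctual, purely singularly continuous, or purely Lebesgue. Since we have already excluded both singular types, the only remaining possibility is that $U_\chi$ has purely Lebesgue spectrum, which proves the first assertion of the theorem.

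For the second assertion, I use the orthogonal decomposition $\ltwo(X\times G,\mu\times\nu)=\bigoplus_{\chi\in\widehat G}L_\chi$ together with the fact that $W|_{L_\chi}$ is unitarily equivalent to $U_\chi$. Since $G$ is compact metric abelian, the character group $\widehat G$ is countable, so the subspace $\bigoplus_{\chi\circ\xi\not\equiv1}L_\chi$ is a countable (and at least countably infinite, when non-trivial) orthogonal sum of subspaces on each of which $W$ acts as an operator with purely Lebesgue spectrum. A countable direct sum of operators with Lebesgue spectrum has countable Lebesgue spectrum, which yields the claim. The only mildly delicate point is confirming that the Mourre estimate from Lemma \ref{Mourre_U_chi} genuinely holds on the whole of $\mathbb S^1$ (not merely on a proper open subset) so that the purity law can force Lebesgue spectrum; but this is immediate because the inequality $(U_\chi)^*[A_n,U_\chi]\ge a>0$ is a global lower bound and does not depend on any spectral localisation.
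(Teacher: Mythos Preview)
Your proof is correct and follows essentially the same approach as the paper: apply Lemma \ref{Mourre_U_chi} together with Theorem \ref{thm_unitary} to obtain purely absolutely continuous spectrum for $U_\chi$, upgrade to Lebesgue spectrum via Helson's purity law, and deduce the claim on $W$ from the direct sum decomposition. The only cosmetic difference is that the paper justifies countable Lebesgue spectrum by invoking the separability of $\ltwo(X\times G,\mu\times\nu)$ rather than the countability of $\widehat G$, which amounts to the same thing.
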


\begin{proof}
We know from Lemma \ref{Mourre_U_chi} that $U_\chi\in C^{1+0}(A_n)$ and that $U_\chi$
satisfies a strict Mourre estimate on all of $\mathbb S^1$. It follows by Theorem
\ref{thm_unitary} that $U_\chi$ has a purely absolutely continuous spectrum, and thus
has a purely Lebesgue spectrum due to the purity law. The claim on $W$ follows from
what precedes if one takes into account the separability of the Hilbert space
$\ltwo(X\times G,\mu\times\nu)$.
\end{proof}

Theorem \ref{spec_co} provides a general criterion for the presence of countable
Lebesgue spectrum for skew products over translations. In the particular case where
$X=\T^d\simeq\R^d/\Z^d$ and $G=\T^{d'}\simeq\R^{d'}/\Z^{d'}$ for some $d,d'\ge1$, the
flow $\{F_t\}_{t\in\R}$ is given (in additive notation) by
$$
F_t(x):=ty+x~\hbox{(mod $\Z^d$)},\quad t\in\R,~x\in\T^d,
$$
for some $y:=(y_1,y_2,\ldots,y_d)\in\R^d$. So, one has $\L_Y=y\cdot\nabla$, and $F_1$
is ergodic if and only if the numbers $y_1,y_2,\ldots,y_d,1$ are rationally
independent \cite[Sec.~3.1]{CFS82}. Furthermore, each group homomorphism
$\xi:\T^d\to\T^{d'}$ is given by $\xi(x):=Nx$ (mod $\Z^{d'}$) for some $d'\times d$
matrix $N$ with integer entries, and each character $\chi_m\in\widehat{\T^{d'}}$ is
given by $\chi_m(z):=\e^{2\pi im\cdot z}$ for some $m\in\Z^{d'}$. Therefore,
$$
\chi_m\circ\xi\not\equiv1
\iff\e^{2\pi im\cdot Nx}\neq1~~\hbox{for some $x\in\T^d$}
\iff N^{\sf T}m\neq0\in\Z^d,
$$
and we obtain the following corollary of Theorem \ref{spec_co}.

\begin{Corollary}[The case of tori]\label{cor_tori}
Let $y_1,y_2,\ldots,y_d,1\in\R$ be rationally independent, let
$\chi_m\in\widehat{\T^{d'}}$ be given by $\chi_m(z):=\e^{2\pi im\cdot z}$ for some
$m\in\Z^{d'}$ and let $\phi:\T^d\to\T^{d'}$ satisfy $\phi=\xi+\eta$, where
\begin{enumerate}
\item[(i)] $\xi:\T^d\to\T^{d'}$ is given by $\xi(x):=Nx$ (\hspace{1pt}mod $\Z^{d'}$)
for some $d'\times d$ matrix $N$ with integer entries,
\item[(ii)] $\eta\in C(\T^d;\T^{d'})$ has a derivative $y\cdot\nabla(m\cdot\eta)$
which satisfies the uniform Dini condition
$$
\int_0^1\frac{\d t}t\,\big\|\;\!y\cdot\nabla(m\cdot\eta)\circ F_t
-y\cdot\nabla(m\cdot\eta)\big\|_{\linf(X)}
<\infty.
$$
\end{enumerate}
Assume also that $N^{\sf T}m\neq0$. Then, the operator
$U_{\chi_m}\equiv\e^{2\pi im\cdot(\xi+\eta)}V_1$ has purely Lebesgue spectrum. In
particular, the restriction of $W$ to the subspace
$\bigoplus_{m\in\Z^{d'}\!,N^{\sf T}m\neq0}L_{\chi_m}$ has countable Lebesgue
spectrum.
\end{Corollary}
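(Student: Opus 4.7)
The plan is to recognize that Corollary \ref{cor_tori} is essentially the specialization of Theorem \ref{spec_co} to $X=\T^d$ and $G=\T^{d'}$, so the proof reduces to verifying each hypothesis of Theorem \ref{spec_co} in the torus setting and then translating the abstract conclusion into the concrete statement about $U_{\chi_m}$ and $W$.

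First, I would check the standing hypotheses on the base system. The torus $\T^d$ is a compact metric abelian Banach Lie group with normalised Haar measure, and the one-parameter subgroup $t\mapsto ty\pmod{\Z^d}$ is $C^\infty$. The ergodicity of $F_1$ under the rational independence assumption on $y_1,\dots,y_d,1$ is the classical Weyl equidistribution criterion, so $F_1$ is ergodic exactly under our hypothesis. The associated generator is $H=-i\hspace{1pt}y\cdot\nabla$ with core $C^\infty(\T^d)$.

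Next I would verify Assumption \ref{ass_phi} for $\phi=\xi+\eta$. Since $\xi(x)=Nx\pmod{\Z^{d'}}$ is a continuous group homomorphism $\T^d\to\T^{d'}$, condition (i) is satisfied. For condition (ii), observe that for the character $\chi_m(z)=\e^{2\pi im\cdot z}$ one has $\chi_m\circ\eta=\e^{2\pi im\cdot\eta}$, so the chain rule gives $\L_Y(\chi_m\circ\eta)=2\pi i\hspace{1pt}(y\cdot\nabla(m\cdot\eta))(\chi_m\circ\eta)$. Since $|\chi_m\circ\eta|\equiv 1$ and $\chi_m\circ\eta$ is continuous on the compact space $X$, the uniform Dini condition on $\L_Y(\chi_m\circ\eta)$ stated in Assumption \ref{ass_phi}(ii) reduces, up to a nonzero constant and a multiplication by a unimodular continuous function, to the uniform Dini condition on $y\cdot\nabla(m\cdot\eta)$ given in the corollary. (To be careful here one should note that multiplication by $\chi_m\circ\eta$ preserves the Dini property because $\chi_m\circ\eta$ is itself continuous hence uniformly continuous on $\T^d$, and the composition with the flow is an isometry on $\linf$.) This is the only step that is not completely automatic.

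Then I would check the remaining hypothesis $\chi_m\circ\xi\not\equiv 1$. The character $\chi_m\circ\xi$ is given by $x\mapsto\e^{2\pi i(N^{\sf T}m)\cdot x}$, which is the trivial character of $\T^d$ if and only if $N^{\sf T}m=0$ in $\Z^d$; hence the assumption $N^{\sf T}m\neq 0$ is precisely $\chi_m\circ\xi\not\equiv 1$. With all hypotheses in place, Theorem \ref{spec_co} yields that $U_{\chi_m}$ has purely Lebesgue spectrum.

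Finally, for the claim on $W$, I would use the orthogonal decomposition $\ltwo(X\times G,\mu\times\nu)=\bigoplus_{m\in\Z^{d'}}L_{\chi_m}$, under which $W|_{L_{\chi_m}}$ is unitarily equivalent to $U_{\chi_m}$. Restricting to the invariant subspace $\bigoplus_{N^{\sf T}m\neq 0}L_{\chi_m}$ and invoking Theorem \ref{spec_co} on each summand gives purely Lebesgue spectrum; countability of the multiplicity follows from the separability of $\ltwo(X\times G,\mu\times\nu)$ together with the countability of $\{m\in\Z^{d'}\mid N^{\sf T}m\neq 0\}$. The main (very mild) obstacle is the bookkeeping in the second paragraph, namely checking that the Dini condition in the corollary is truly equivalent to the one in Assumption \ref{ass_phi}(ii); everything else is a direct identification.
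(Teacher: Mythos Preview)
Your proposal is correct and follows exactly the paper's approach: the corollary is stated in the paper as an immediate specialisation of Theorem~\ref{spec_co} once one identifies $\L_Y=y\cdot\nabla$, the ergodicity criterion for $F_1$, the form of the homomorphisms $\xi$, and the equivalence $\chi_m\circ\xi\not\equiv1\iff N^{\sf T}m\neq0$. One small tightening: in your verification of Assumption~\ref{ass_phi}(ii), the reason multiplication by $\chi_m\circ\eta$ preserves the Dini property is not mere uniform continuity but the fact that $\chi_m\circ\eta$ is Lipschitz along the flow (since $m\cdot\eta$ has bounded $\L_Y$-derivative), which gives $\|(\chi_m\circ\eta)\circ F_t-\chi_m\circ\eta\|_{\linf}=O(t)$ and hence an integrable contribution.
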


In the case $d=d'=1$, Corollary \ref{cor_tori} implies that the restriction of $W$ to
$\bigoplus_{m\in\Z\setminus\{0\}}L_{\chi_m}$ has countable Lebesgue spectrum if
$\phi(x)=Nx+\eta(x)$, with $N\in\Z\setminus\{0\}$ and with $\eta\in C^1(\T;\T)$ such
that $\eta'$ is Dini-continuous. This is a bit more restrictive than Theorem 1 of
\cite{ILR93}, where A. Iwanik, M. Lema\'nzyk and D. Rudolph show the same result under
the condition that $\eta$ is absolutely continuous with $\eta'$ of bounded variation
(see also \cite[Sec.~2]{Iwa97_1} for another sufficient condition given in terms of
the Fourier coefficients of $\eta$). Results prior to \cite{ILR93} along this line
can be found in the papers of A. G. Kushnirenko \cite{Kus74} and G. H. Choe
\cite{Cho87}.

In the case $d,d'\ge1$, Corollary \ref{cor_tori} implies that the restriction of $W$
to $\bigoplus_{m\in\Z^{d'}\!,N^{\sf T}m\neq0}L_{\chi_m}$ has countable Lebesgue
spectrum if $\phi(x)=Nx+\eta(x)$, with $N$ a $d'\times d$ matrix with integer entries
and with $\eta\in C(\T^d;\T^{d'})$ such that $y\cdot\nabla(m\cdot\eta)$ exists and
satisfies a uniform Dini condition along the flow $\{F_t\}_{t\in\R}$. This
complements the result of \cite[Sec.~3]{Iwa97_2}, where A. Iwanik shows the same
result for functions $\eta\in C^1(\T^d;\T^{d'})$ with Fourier coefficients satisfying
some decay assumption (see also the works of B. Fayad \cite{Fay02} and K.
Fr{\c{a}}czek \cite{Fra95,Fra00} for related results on the spectrum of skew products
on tori).

We note that it would be interesting to see if the technics of this section could be
adapted to the case of cocycles taking values in non-abelian groups, such as the case
of ${\sf SU}(2)$ considered in \cite{Fra00_2}.

\section{Furstenberg transformations}\label{Sec_Fur}
\setcounter{equation}{0}

For each integer $n\ge1$, we denote by $\mu_n$ the normalised Haar measure on the
torus $\T^n\simeq\R^n/\Z^n$ and we set $\H_n:=\ltwo\big(\T^n,\mu_n\big)$ for the
corresponding Hilbert space. Furstenberg transformations \cite[Sec.~2.3]{Fur61} are
the invertible measure-preserving maps $T_d:\T^d\to\T^d$ ($d\ge2$) given by
\begin{align*}
&T_d(x_1,x_2,\ldots,x_d)\\
&:=\big(x_1+y,x_2+b_{2,1}x_1+h_1(x_1),\ldots,
x_d+b_{d,1}x_1+\cdots+b_{d,d-1}x_{d-1}+h_{d-1}(x_1,x_2,\ldots,x_{d-1})\big)
~\hbox{(mod $\Z^d$)},
\end{align*}
where $y\in\R\setminus\Q$, $b_{j,k}\in\Z$, $b_{\ell,\ell-1}\ne0$ for
$\ell\in\{2,\ldots,d\}$ and each $h_j:\T^j\to\R$ satisfies a uniform Lipschitz
condition in the variable $x_j$. The corresponding Koopman operator
\begin{equation}\label{def_W_d}
W_d:\H_d\to\H_d\hspace{1pt},\quad\varphi\mapsto\varphi\circ T_d\hspace{1pt},
\end{equation}
is reduced by the orthogonal decompositions
\begin{equation}\label{dec_ortho}
\H_d
=\H_1\oplus\bigoplus_{j\in\{2,\ldots,d\}}\big(\H_j\cap \H_{j-1}^\perp\big)
=\H_1\oplus\bigoplus_{j\in\{2,\ldots,d\},\,k\in\Z\setminus\{0\}}\H_{j,k}\hspace{1pt},
\end{equation}
where the subspaces $\H_{j,k}\subset\H_j$ are defined by
$
\H_{j,k}:=\overline{{\rm Span}\big\{\eta\otimes\chi_k\mid\eta\in\H_{j-1}\big)\big\}}
$,
with $\chi_k\in\widehat\T$ the character given by $\chi_k(x_j):=\e^{2\pi ikx_j}$ (see
\cite[Sec.~13.3]{CFS82} for details). Furthermore, the restriction $W_d|_{\H_{j,k}}$
is unitarily equivalent to the unitary operator given by
\begin{equation}\label{Ujk}
U_{j,k}\hspace{1pt}\eta:=\e^{2\pi ik\phi_j}W_{j-1}\hspace{1pt}\eta,
\quad\eta\in\H_{j-1},
\end{equation}
with
$
\phi_j(x_1,x_2,\ldots,x_{j-1})
:=b_{j,1}x_1+\cdots+b_{j,j-1}x_{j-1}+h_{j-1}(x_1,x_2,\ldots,x_{j-1})
$.

The operators $U_{j,k}=\e^{2\pi ik\phi_j}W_{j-1}$ are similar to the operators
$U_\chi=(\chi\circ\phi)\hspace{1pt}V_1$ studied in Section \ref{sec_skew}. So, we
apply to them the same method. First, we define an operator (vector field) $A$ which
commutes with $W_{j-1}$ and has an appropriate commutator with $\e^{2\pi ik\phi_j}$,
and then we use as a congugate operator the average  
$\frac1n\sum_{\ell=0}^{n-1}U_{j,k}^{-\ell}\hspace{1pt}A\hspace{1pt}U_{j,k}^\ell$ of
$A$ along the flow $\big\{U_{j,k}^\ell\big\}_{\ell\in\Z}$ generated by $U_{j,k}$.

We start with the definition of the operator $A$ and then we prove regularity
properties of the operators $U_{j,k}$ with respect to $A$. For this, we recall that
the translation group $\{V_{t,{j-1}}\}_{t\in\R}$ in $\H_{j-1}$ given by
$$
\big(V_{t,{j-1}}\eta\big)(x_1,x_2,\ldots,x_{j-1})
:=\eta\big(x_1,x_2,\ldots,x_{j-1}-t\hbox{ (mod $\Z$)}\big),
\quad t\in\R,~\eta\in C(\T^{j-1}),
$$
has self-adjoint generator $P_{j-1}:=-i\hspace{1pt}\partial_{j-1}$ which is
essentially self-adjoint on $C^\infty(\T^{j-1})$. Also, for $j\in\{2,\ldots,d\}$ and
$k\in\Z\setminus\{0\}$, we use the notations
$$
g:=1+(b_{j,j-1})^{-1}\partial_{j-1}h_{j-1}
\qquad\hbox{and}\qquad
A:=(2\pi kb_{j,j-1})^{-1}P_{j-1},
$$
and observe that $A$ is self-adjoint with $\dom(A)=\dom(P_{j-1})$ and that
$g\in\linf(\T^{j-1})$ due to the uniform Lipschitz condition satisfied by $h_{j-1}$
in the variable $x_{j-1}$. We also note that $g$ and $A$ depend on $j$ and $k$, even
if we do not specify it in the notation.

\begin{Lemma}\label{UetA}
Let $j\in\{2,\ldots,d\}$ and $k\in\Z\setminus\{0\}$. Assume that $h_{j-1}$ is of
class $C^1$ in the variable $x_{j-1}$ and that $\partial_{j-1}h_{j-1}$ satisfies a
uniform Dini condition in the variable $x_{j-1}$. Then, one has
$U_{j,k}\in C^{1+0}(A)$ with $[A,U_{j,k}]=g\hspace{1pt}U_{j,k}$\hspace{1pt}.
\end{Lemma}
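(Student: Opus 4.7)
The plan is to follow closely the strategy of Lemma \ref{UtetA}, with the self-adjoint operator $A=(2\pi kb_{j,j-1})^{-1}P_{j-1}$ playing the role of $-i\xi_0 H$ and the Koopman operator $W_{j-1}$ playing the role of $V_1$. The argument splits into three steps: (i) show $W_{j-1}$ commutes with $A$; (ii) compute $[A,e^{2\pi ik\phi_j}]$ as a quadratic form on smooth functions and extend by density to obtain $U_{j,k}\in C^1(A)$; (iii) promote the regularity to $C^{1+0}(A)$ using the Dini hypothesis.

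For step (i), inspect the definition of $T_{j-1}$: the $(j-1)$-st component of $T_{j-1}(x_1,\dots,x_{j-1})$ equals $x_{j-1}+b_{j-1,1}x_1+\cdots+b_{j-1,j-2}x_{j-2}+h_{j-2}(x_1,\dots,x_{j-2})$, so the shift in $x_{j-1}$ does not depend on $x_{j-1}$. Consequently $W_{j-1}$ commutes with $\partial_{j-1}$ on $C^\infty(\T^{j-1})$, hence with $P_{j-1}$ and with $A$. In particular $[A,U_{j,k}]=[A,e^{2\pi ik\phi_j}]\hspace{1pt}W_{j-1}$ at the level of quadratic forms on $C^\infty(\T^{j-1})$.

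For step (ii), a direct computation on $C^\infty(\T^{j-1})$ using $\partial_{j-1}\phi_j=b_{j,j-1}+\partial_{j-1}h_{j-1}$ gives
$$
\big[-i\partial_{j-1},\e^{2\pi ik\phi_j}\big]
=2\pi kb_{j,j-1}\big(1+(b_{j,j-1})^{-1}\partial_{j-1}h_{j-1}\big)\hspace{1pt}\e^{2\pi ik\phi_j}
=2\pi kb_{j,j-1}\hspace{1pt}g\hspace{1pt}\e^{2\pi ik\phi_j}.
$$
Dividing by $2\pi kb_{j,j-1}$ and combining with step (i) yields the form identity $\langle A\eta,U_{j,k}\eta\rangle-\langle\eta,U_{j,k}A\eta\rangle=\langle\eta,g\hspace{1pt}U_{j,k}\eta\rangle$ for all $\eta\in C^\infty(\T^{j-1})$. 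Since $g\in\linf(\T^{j-1})$ thanks to the uniform Lipschitz condition on $h_{j-1}$ in $x_{j-1}$, the form extends continuously to $\dom(A)$, giving $U_{j,k}\in C^1(A)$ with $[A,U_{j,k}]=g\hspace{1pt}U_{j,k}$.

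For step (iii), exactly as in the proof of Lemma \ref{UtetA}, the identity $[A,U_{j,k}]=g\hspace{1pt}U_{j,k}$ together with $U_{j,k}\in C^1(A)$ reduces the $C^{1+0}(A)$ property to the bound
$$
\int_0^1\frac{\d t}t\,\big\|\e^{-itA}g\hspace{1pt}\e^{itA}-g\big\|_{\B(\H_{j-1})}<\infty.
$$
The unitary group $\{\e^{-itA}\}_{t\in\R}$ acts as translation by $-t/(2\pi kb_{j,j-1})$ in the variable $x_{j-1}$, so the integrand equals the $\linf$-modulus of continuity of $g$ in $x_{j-1}$. After the change of variable $s=t/(2\pi kb_{j,j-1})$, the integral is controlled by the uniform Dini condition assumed on $\partial_{j-1}h_{j-1}$ in $x_{j-1}$ (the constant part of $g$ contributes zero). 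No step is genuinely hard; the only point requiring care is the verification in step (i) that the Furstenberg structure makes $W_{j-1}$ commute with $P_{j-1}$, since the entire construction of the conjugate operator hinges on this property.
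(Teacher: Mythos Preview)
Your proof is correct and follows essentially the same approach as the paper's own proof. You are merely more explicit in justifying why $W_{j-1}$ commutes with $P_{j-1}$ (the paper simply asserts this) and in spelling out the commutator computation $[A,\e^{2\pi ik\phi_j}]=g\,\e^{2\pi ik\phi_j}$, but the structure---compute the commutator on smooth vectors, extend by density, then reduce the $C^{1+0}$ condition to the Dini bound on $g$ via the translation action of $\e^{-itA}$---is identical.
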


\begin{proof}
Since $A$ and $W_{j-1}$ commute and since $h_{j-1}$ satisfies a uniform Lipschitz
condition in the variable $x_{j-1}$, one has for each $\eta\in C^\infty(\T^{j-1})$
that
$$
\big\langle A\hspace{1pt}\eta,U_{j,k}\hspace{1pt}\eta\big\rangle_{\H_{j-1}}
-\big\langle\eta,U_{j,k}\hspace{1pt}A\hspace{1pt}\eta\big\rangle_{\H_{j-1}}
=\big\langle\eta,
\big[A,\e^{2\pi ik\phi_j}\big]W_{j-1}\eta\big\rangle_{\H_{j-1}}
=\big\langle\eta,g\hspace{1pt}U_{j,k}\hspace{1pt}\eta\big\rangle_{\H_{j-1}},
$$
with $g\in\linf(\T^{j-1})$. So, one has $U_{j,k}\in C^1(A)$ with
$[A,U_{j,k}]=g\hspace{1pt}U_{j,k}$ due to the density of $C^\infty(\T^{j-1})$ in
$\dom(A)$.

To show that $U_{j,k}\in C^{1+0}(A)$, one has to check that
$
\int_0^1\frac{\d t}t\hspace{1pt}
\big\|\e^{-itA}[A,U_{j,k}]\e^{itA}-[A,U_{j,k}]\big\|_{\B(\H_{j-1})}<\infty
$.
But since $[A,U_{j,k}]=g\hspace{1pt}U_{j,k}$ with $U_{j,k}\in C^1(A)$, one is reduced
to showing that
\begin{align*}
&\int_0^1\frac{\d t}t\hspace{1pt}\big\|\e^{-itA}g\e^{itA}-g\big\|_{\B(\H_{j-1})}
<\infty\\
&\iff\int_0^{(2\pi kb_{j,j-1})^{-1}}\frac{\d s}s\,
\big\|V_{s,j-1}(\partial_{j-1}h_{j-1})V_{-s,j-1}
-(\partial_{j-1}h_{j-1})\big\|_{\B(\H_{j-1})}<\infty,
\end{align*}
which is is readily verified due to the uniform Dini condition of
$\partial_{j-1}h_{j-1}$ in the variable $x_{j-1}$.
\end{proof}

Since $U_{j,k}\in C^1(A)$, we know from Section \ref{Sec_com} that the operator
$$
A_n\eta
:=\frac1n\sum_{\ell=0}^{n-1}
U_{j,k}^{-\ell}\hspace{1pt}A\hspace{1pt}U_{j,k}^\ell\hspace{1pt}\eta
=\frac1n\sum_{\ell=0}^{n-1}U_{j,k}^{-\ell}\big[A,U_{j,k}^\ell\big]\eta
+A\hspace{1pt}\eta,\quad n\in\N^*,~\eta\in\dom(A_n):=\dom(A),
$$
is self-adjoint. In the next lemma, we prove regularity properties of the operator
$U_{j,k}$ with respect to $A_n$ and the strict Mourre estimate for $U_{j,k}$. The
averages of the function $g$ along the flow $\{T_{j-1}^\ell\}_{\ell\in\Z}$, \ie,
$$
g_n:=\frac1n\sum_{\ell=0}^{n-1}g\circ T_{j-1}^{-\ell}\hspace{1pt},\quad n\in\N^*,
$$
appear in a natural way.

\begin{Lemma}[Strict Mourre estimate for $U_{j,k}$]\label{Mourre_Ujk}
Let $j\in\{2,\ldots,d\}$ and $k\in\Z\setminus\{0\}$. Assume that $h_{j-1}$ is of
class $C^1$ in the variable $x_{j-1}$ and that $\partial_{j-1}h_{j-1}$ satisfies a
uniform Dini condition in the variable $x_{j-1}$. Then,
\begin{enumerate}
\item[(a)] one has $U_{j,k}\in C^{1+0}(A_n)$ with
$[A_n,U_{j,k}]=g_n\hspace{1pt}U_{j,k}$\hspace{1pt},
\item[(b)] if $n$ is big enough, one has $g_n>0$ and $(U_{j,k})^*[A_n,U_{j,k}]\ge a$
with $a:=\inf_{x\in\T^{d-1}}g_n(x)>0$.
\end{enumerate}
\end{Lemma}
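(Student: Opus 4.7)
The strategy is to follow verbatim the scheme used in the proof of Lemma \ref{Mourre_U_chi}, since the operators $U_{j,k}=\e^{2\pi ik\phi_j}W_{j-1}$ play exactly the role of the operators $U_\chi=(\chi\circ\phi)V_1$ of Section \ref{sec_skew}, and the building blocks are already in place: Lemma \ref{UetA} gives $U_{j,k}\in C^{1+0}(A)$ with $[A,U_{j,k}]=g\hspace{1pt}U_{j,k}$, and the discrete Furstenberg transformation $T_{j-1}$ (whose Koopman operator is $W_{j-1}$) is uniquely ergodic on $\T^{j-1}$ by the classical theorem of Furstenberg \cite{Fur61} (this is where the hypothesis $y\in\R\setminus\Q$ and the successive non-vanishing conditions $b_{\ell,\ell-1}\neq0$ enter).

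For part (a), the plan is to invoke the abstract transfer result \cite[Lemma~4.1]{FRT12}: the hypothesis $U_{j,k}\in C^{1+0}(A)$ guarantees that $U_{j,k}\in C^{1+0}(A_n)$ for every $n\in\N^*$, together with the identity
$$
[A_n,U_{j,k}]=\frac1n\sum_{\ell=0}^{n-1}U_{j,k}^{-\ell}\hspace{1pt}[A,U_{j,k}]\hspace{1pt}U_{j,k}^\ell.
$$
Substituting $[A,U_{j,k}]=g\hspace{1pt}U_{j,k}$ and factoring one $U_{j,k}$ to the right, one is reduced to computing $U_{j,k}^{-\ell}\hspace{1pt}g\hspace{1pt}U_{j,k}^\ell$. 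Because the phase factors $\e^{2\pi ik\phi_j}$ and multiplication by $g$ commute (both are multiplication operators), this conjugation collapses to $W_{j-1}^{-\ell}\hspace{1pt}g\hspace{1pt}W_{j-1}^\ell=g\circ T_{j-1}^{-\ell}$, so the sum is exactly $g_n$ and one obtains $[A_n,U_{j,k}]=g_n\hspace{1pt}U_{j,k}$.

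For part (b), the plan is to exploit unique ergodicity of $\{T_{j-1}^\ell\}_{\ell\in\Z}$ to get uniform convergence $\lim_{n\to\infty}g_n=\int_{\T^{j-1}}\d\mu_{j-1}\,g$ on $\T^{j-1}$. Using $g=1+(b_{j,j-1})^{-1}\partial_{j-1}h_{j-1}$ and the fact that $\partial_{j-1}h_{j-1}$ integrates to $0$ on $\T^{j-1}$ by Fubini and periodicity of $h_{j-1}$ in $x_{j-1}$ (which requires $h_{j-1}$ to be $C^1$ in $x_{j-1}$, as assumed), the limit is $1>0$. Hence $g_n>0$ uniformly for $n$ big enough. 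Combining this with part (a) and the fact that conjugation by the unitary $U_{j,k}$ preserves the lower bound of a multiplication operator gives
$$
(U_{j,k})^*[A_n,U_{j,k}]=(U_{j,k})^*g_n\hspace{1pt}U_{j,k}\ge\inf_{x\in\T^{j-1}}g_n(x)=a>0.
$$

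The only step that could be delicate is the justification of unique ergodicity of the intermediate transformations $T_{j-1}$, but this is classical and follows inductively from Furstenberg's criterion for skew products over uniquely ergodic bases, applicable here thanks to the non-degeneracy assumption $b_{\ell,\ell-1}\neq0$; the rest is essentially a line-by-line transcription of the proof of Lemma \ref{Mourre_U_chi}.
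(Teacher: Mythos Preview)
Your proposal is correct and follows essentially the same approach as the paper: invoke \cite[Lemma~4.1]{FRT12} to transfer $C^{1+0}(A)$ to $C^{1+0}(A_n)$ and compute $[A_n,U_{j,k}]=g_nU_{j,k}$ via $U_{j,k}^{-\ell}gU_{j,k}^\ell=g\circ T_{j-1}^{-\ell}$, then use Furstenberg's unique ergodicity of $T_{j-1}$ \cite[Thm.~2.1]{Fur61} together with $\int_{\T^{j-1}}\partial_{j-1}h_{j-1}\,\d\mu_{j-1}=0$ to get $g_n\to1$ uniformly. Your explicit remark that the phase factors commute with multiplication by $g$ (so that conjugation by $U_{j,k}^\ell$ reduces to conjugation by $W_{j-1}^\ell$) is a useful clarification of a step the paper leaves implicit.
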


\begin{proof}
(a) We know from Lemma \ref{UetA} that $U_{j,k}\in C^{1+0}(A)$. So, it follows from
the abstract result \cite[Lemma~4.1]{FRT12} that $U_{j,k}\in C^{1+0}(A_n)$ with
$
[A_n,U_{j,k}]
=\frac1n\sum_{\ell=0}^{n-1}U_{j,k}^{-\ell}\hspace{1pt}[A,U_{j,k}]\hspace{1pt}
U_{j,k}^\ell
$.
Using the equality $[A,U_{j,k}]=g\hspace{1pt}U_{j,k}$, one thus obtains that
\begin{align*}
\big[A_n,U_{j,k}\big]
=\left(\frac1n\sum_{\ell=0}^{n-1}U_{j,k}^{-\ell}\hspace{1pt}g\hspace{1pt}
U_{j,k}^\ell\right)U_{j,k}
=\left(\frac1n\sum_{\ell=0}^{n-1}g\circ T_{j-1}^{-\ell}\right)U_{j,k}
=g_n\hspace{1pt}U_{j,k}\hspace{1pt},
\end{align*}
which concludes the proof of the claim.

(b) It is known from \cite[Thm.~2.1]{Fur61} that the transformation $T_{j-1}$ is
uniquely ergodic. So, one has that
$$
\lim_{n\to\infty}g_n
=\int_{\T^{j-1}}\d\mu_{j-1}\,g
=1+i(b_{j,j-1})^{-1}\big\langle1,P_{j-1}h_{j-1}\big\rangle_{\H_{j-1}}
=1
$$
uniformly on $\T^{j-1}$. Therefore, $g_n>0$ if $n$ is big enough, and point (a)
implies that
$$
(U_{j,k})^*[A_n,U_{j,k}]=(U_{j,k})^*g_n\hspace{1pt}U_{j,k}\ge a,
$$
with $a=\inf_{x\in\T^{d-1}}g_n(x)>0$.
\end{proof}

Using what precedes, we can determine the spectral properties of the operator $W_d$
(see \eqref{def_W_d} for the definition of $W_d$)\hspace{1pt}:

\begin{Theorem}[Spectral properties of $W_d$]\label{Thm_Fur}
For each $j\in\{2,\ldots,d\}$, assume that $h_{j-1}$ is of class $C^1$ in the
variable $x_{j-1}$ and that $\partial_{j-1}h_{j-1}$ satisfies a uniform Dini
condition in the variable $x_{j-1}$. Then, $W_d$ has countable Lebesgue spectrum in
the orthocomplement of $\hspace{1pt}\H_1$.
\end{Theorem}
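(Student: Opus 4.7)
The plan is to reduce the spectral analysis of $W_d$ on the orthocomplement of $\H_1$ to that of the individual pieces $U_{j,k}$ via the decomposition \eqref{dec_ortho}, then apply the commutator machinery of Section \ref{Sec_com} to each piece and conclude with the purity law. Concretely, by \eqref{dec_ortho} and \eqref{Ujk}, the restriction $W_d|_{\H_1^\perp}$ is unitarily equivalent to $\bigoplus_{j\in\{2,\ldots,d\},\,k\in\Z\setminus\{0\}}U_{j,k}$, so it is enough to show that every $U_{j,k}$ has Lebesgue spectrum and then to assemble the pieces.

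For each fixed $(j,k)$, I would choose $n=n(j,k)\in\N^*$ large enough that Lemma \ref{Mourre_Ujk} applies. This yields a self-adjoint operator $A_n$ with $U_{j,k}\in C^{1+0}(A_n)$ and a strict Mourre estimate $(U_{j,k})^*[A_n,U_{j,k}]\ge a>0$ valid on all of $\mathbb S^1$. Theorem \ref{thm_unitary} with $\Theta=\mathbb S^1$ and $K=0$ then rules out singular spectrum altogether, so $U_{j,k}$ is purely absolutely continuous.

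Next I would upgrade absolute continuity to Lebesgue spectrum via the purity law recalled in Section \ref{sec_skew}: the operator $U_{j,k}=\e^{2\pi ik\phi_j}W_{j-1}$ has exactly the structural form of a character-weighted Koopman operator over the uniquely ergodic base $T_{j-1}$ (unique ergodicity being \cite[Thm.~2.1]{Fur61} and already used in the proof of Lemma \ref{Mourre_Ujk}). By Helson's analysis, its spectrum must have uniform multiplicity and be either pure point, pure singularly continuous, or pure Lebesgue; combined with the previous paragraph, this forces $U_{j,k}$ to be purely Lebesgue.

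Finally, since the index set $\{(j,k):j\in\{2,\ldots,d\},\,k\in\Z\setminus\{0\}\}$ is countable and each $\H_{j,k}$ is infinite-dimensional inside the separable Hilbert space $\H_d$, the orthogonal sum $\bigoplus_{j,k}U_{j,k}$ has Lebesgue spectrum with multiplicity equal to the sum of the individual multiplicities, which is exactly $\aleph_0$; this is the countable Lebesgue spectrum claimed. The substantive difficulty has already been absorbed by Lemmas \ref{UetA} and \ref{Mourre_Ujk}; the only remaining point of care is to verify that the purity law is applicable in the Furstenberg setting, and this is guaranteed by the fact that $U_{j,k}$ has precisely the form $(\chi\circ\phi)\hspace{1pt}V$ to which Helson's framework applies.
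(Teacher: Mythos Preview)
Your argument matches the paper exactly through the absolute continuity of each $U_{j,k}$; the divergence is only in the upgrade to countable Lebesgue. The paper does not apply Helson's purity piecewise. Instead, having established absolute continuity on all of $\H_1^\perp$ via \eqref{dec_ortho}, it invokes a different purity result, \cite[Thm.~8]{Goo99}, which uses that $W_d|_{\H_1}=W_1$ has pure point spectrum and that $T_d$ is ergodic to conclude directly that $W_d|_{\H_1^\perp}$ has countable Lebesgue spectrum. Your route---Helson on each $U_{j,k}$, then sum the countably many pieces---reaches the same destination and is perhaps more transparent about where the multiplicity $\aleph_0$ comes from, but see the caveat below.

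The purity law quoted in Section~\ref{sec_skew} is stated for $U_\chi=(\chi\circ\phi)V_1$ with $V_1$ a \emph{translation on a compact abelian group}; for $j\ge3$ the base $W_{j-1}$ is a genuine Furstenberg transformation, not a translation, so merely observing that $U_{j,k}$ ``has the form $(\chi\circ\phi)V$'' does not by itself license an appeal to Section~\ref{sec_skew}. What actually makes the Helson mechanism work here is that $T_{j-1}$ has a dense group of eigenvalues: the function $x\mapsto\e^{2\pi imx_1}$ is an eigenfunction of $W_{j-1}$ with eigenvalue $\e^{2\pi imy}$, and conjugation by it shows that $U_{j,k}$ is unitarily equivalent to $\e^{2\pi imy}U_{j,k}$ for every $m\in\Z$; density of $\{\e^{2\pi imy}:m\in\Z\}$ in $\mathbb S^1$ (since $y\notin\Q$) then forces the maximal spectral type of $U_{j,k}$ to be rotation-invariant, hence Lebesgue once absolute continuity is known, with uniform multiplicity. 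You should supply this short computation (or cite a version of the purity law valid over an ergodic base with dense point spectrum) rather than leaning on the formulation in Section~\ref{sec_skew} as written.
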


\begin{proof}
Let $j\in\{2,\ldots,d\}$ and $k\in\Z\setminus\{0\}$. Then, we know from Lemma
\ref{Mourre_Ujk} that $U_{j,k}\in C^{1+0}(A_n)$ and that $U_{j,k}$ satisfies a strict
Mourre estimate on all of $\mathbb S^1$. It follows by Theorem \ref{thm_unitary} that
$U_{j,k}$ has a purely absolutely continuous spectrum in $\H_{j,k}$, and thus that
$W_d$ has purely purely absolutely continuous spectrum in the orthocomplement of
$\H_1$ due to the orthogonal decomposition \eqref{dec_ortho}. Since $W_1$ has pure
point spectrum and $T_d$ is ergodic, it follows from the standard purity law (see
\cite[Thm.~8]{Goo99}) that $W_d$ has countable Lebesgue spectrum in the
orthocomplement of $\H_1$.
\end{proof}

The result of Theorem \ref{Thm_Fur} is not optimal; the nature of the spectrum of
$W_d$ has already been determined by A. Iwanik, M. Lema\'nzyk and D. Rudolph in
\cite[Cor.~3]{ILR93} under a slightly weaker assumption ($\partial_{j-1}h_{j-1}$ of
bounded variation in the variable $x_{j-1}$ instead of Dini-continuous). Even so, our
proof is of independent interest since it is completely new and does not rely on the
study of the Fourier coefficients of the spectral measure.

As a final comment, we note that it would be interesting to see if the technics of
this section could be adapted to variants of Furstenberg transformations (such as
the ones studied in \cite{Hah65}, \cite{JV09} or \cite{Mil86}).

\section*{Acknowledgements}

The author thanks S. Richard for his idea to take into account the unique ergodicity
in \cite[Sec.~3.3]{FRT12}; this partly motivated the present paper.


\def\polhk#1{\setbox0=\hbox{#1}{\ooalign{\hidewidth
  \lower1.5ex\hbox{`}\hidewidth\crcr\unhbox0}}}
  \def\polhk#1{\setbox0=\hbox{#1}{\ooalign{\hidewidth
  \lower1.5ex\hbox{`}\hidewidth\crcr\unhbox0}}} \def\cprime{$'$}
  \def\cprime{$'$}


\end{document}